\numberwithin{equation}{section}
\tikzset{
every node/.style={circle, inner sep=2pt}
}
\pgfplotsset{yticklabel style={
        /pgf/number format/fixed,
        /pgf/number format/precision=5
},
scaled y ticks=false}
\pgfplotsset{
    legend entry/.initial=,
    every axis plot post/.code={%
        \pgfkeysgetvalue{/pgfplots/legend entry}\tempValue
        \ifx\tempValue\empty
            \pgfkeysalso{/pgfplots/forget plot}%
        \else
            \expandafter\addlegendentry\expandafter{\tempValue}%
        \fi
    },
}
\newtheorem{theorem}{Theorem}
\newtheorem{lemma}[theorem]{Lemma}
\newtheorem{proposition}[theorem]{Proposition}
\newtheorem{corollary}[theorem]{Corollary}
\theoremstyle{definition}
\newtheorem{definition}[theorem]{Definition}
\newtheorem{remark}[theorem]{Remark}
\newtheorem{example}[theorem]{Example}
\def \G {\mathcal{G}}
\def \CG {\mathcal{CG}}
\newcommand{\diag}{\operatorname{diag}}
\newcommand{\trs}{\operatorname{trs}}
\newcommand{\SNF}{\operatorname{SNF}}
\newcommand{\spec}{\operatorname{spec}}
\newcommand{\invs}{\operatorname{invs}}
\newcommand{\minors}{\operatorname{minors}}
\newcommand{\GSP}{\operatorname{gsp}}
\newcommand{\GIN}{\operatorname{gin}}
\newcommand{\diam}{\operatorname{diam}}
\newcommand{\cof}{\operatorname{cof}}
\pgfplotsset{compat=1.14}
\begin{document}



\title{Distinguishing graphs by their spectra,\\ Smith normal forms and complements}

\author{Aida Abiad\thanks{\texttt{a.abiad.monge@tue.nl}, Department of Mathematics and Computer Science, Eindhoven University of Technology, The Netherlands} \thanks{Department of Mathematics: Analysis, Logic and Discrete Mathematics, Ghent University, Belgium} \thanks{Department of Mathematics and Data Science, Vrije Universiteit Brussel, Belgium} \qquad Carlos A. Alfaro \thanks{\texttt{carlos.alfaro@banxico.org.mx}, Banco de M\'exico, Mexico} \qquad  Ralihe R. Villagr\'an\thanks{\texttt{r.r.villagran.olivas@tue.nl},  Department of Mathematics and Computer Science, Eindhoven University of Technology, The Netherlands}}

\date{}

\maketitle

\begin{abstract}
The search for a highly discriminating and easily computable invariant to distinguish graphs remains a challenging research topic. 
Here we focus on cospectral graphs whose complements are also cospectral (generalized cospectral), and on coinvariant graphs (same Smith normal form) whose complements are also coinvariant (generalized coinvariant). We show a new characterization of generalized cospectral graphs in terms of codeterminantal graphs. We also establish the Smith normal form of some graph classes for certain associated matrices, and as an application, we prove that the Smith normal form can be used to uniquely determine star graphs. Finally, for graphs up to 10 vertices, we present enumeration results on the number of generalized cospectral graphs and generalized coinvariant graphs with respect to several associated matrices.


\end{abstract}
\medskip
{\small{{\bf Keywords:}  Graph characterizations; Eigenvalues; Smith normal form; Sandpile group; Graph complement.}}


\section{Introduction}

We address the following problem: can we use the spectrum of a graph and its complement (generalized spectrum), or the Smith normal form (SNF) of a graph and its complement (generalized invariants), as a way to distinguish graphs? Finding graph invariants with high discrimination power to distinguish graphs has triggered a lot of work in the last decades. While much attention has been given to the problem of characterizing graphs by their spectra \cite{vDH} and also their generalized spectra \cite{W2017}, less attention has been devoted to understanding which graph properties follow from the SNF. Neither much is known about graph characterizations using the SNF. So far, the SNF has been related to a few structural properties of a graph, such as its spanning trees, sandpile group, the chip-firing game \cite{KlivansBook}, number of (odd) cycles \cite{Grossman}, cuts \cite{Watkins90}, and connectivity. We refer the reader to \cite{stanley} for an overview of the applications of SNF in combinatorics. In fact, Stanley recently commented on the role of the SNF in combinatorics \cite{interview}: \emph{``Although I enjoy these SNF problems, they seem to be mostly problems in algebra, not combinatorics. An exception is the connection between the SNF of the Laplacian matrix of a graph $G$ and chip-firing on $G$... It would be great to have some further combinatorial applications of SNF''}. This provides the initial motivation for our work. 

The SNF has also been related to the spectrum \cite{lorenzini2008,R}. Recently, the SNF of the walk-matrix of a graph has been used to push even further some existing results on characterizations of graphs using their generalized spectra \cite{QWWZ2023}. The SNF has also been proposed as an alternative to the spectrum and has been used to show some graph characterizations \cite{enumeration}.

Let $M$ be a matrix associated with a graph $G$. The \emph{generalized $M$-spectrum} of a graph $G$ is defined to be the pair $(\spec_M(G),\spec_M( \overline{G}))$, where $\overline{G}$ is the complement of $G$. 
Let $G$ and $H$ be two graphs. Then $G$ and $H$ are called \emph{$M$-cospectral} (resp. \emph{generalized $M$-cospectral}) if $G$ and $H$ share the same $M$-spectrum (resp. generalized $M$-spectrum). Recently the generalized spectra have received much attention in the context of graph spectral characterizations, see \cite{W2017} and the references therein.
Two graphs $G$ and $H$ are said to be  $(M,\mathbb{R})$-\emph{cospectral} if they are $(xI+yJ+zM)$-cospectral for every $x,y,z\in \mathbb{R}$, $z\neq 0$. For the adjacency matrix, we can restrict our focus to matrices of the form $yJ-M$ without loss of generality \cite{vDHK}. The following theorem connects the aforementioned concepts for the case when $M$ is the adjacency matrix.

\begin{theorem}[\cite{vDHK,JohnsonN}]\label{original-adjacency}
Let $G$ and $H$ be two graphs. The following statements are equivalent:
    \begin{itemize}
        \item[(i)] $G$ and $H$ are generalized $A$-cospectral,
        \item[(ii)] $G$ and $H$ are $(A,\mathbb{R})$-cospectral, 
        \item[(iii)] $G$ and $H$ are $(yJ-A)$-cospectral for an irrational number $y$, and
        \item[(iv)] $G$ and $H$ are $(yJ-A)$-cospectral for two different values of $y$. 
    \end{itemize}
\end{theorem}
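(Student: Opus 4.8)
The plan is to funnel all four statements through a single polynomial identity. For a graph $G$ on $n$ vertices with adjacency matrix $A_G$, put $p_G(x,y,z) = \det(xI + yJ + zA_G)$. Since $J = \mathbf{1}\mathbf{1}\trans$ has rank one, the matrix determinant lemma gives the identity of polynomials
\[
p_G(x,y,z) = \det(xI + zA_G) + y\,\mathbf{1}\trans\operatorname{adj}(xI + zA_G)\,\mathbf{1} =: \alpha_G(x,z) + y\,\beta_G(x,z),
\]
with $\alpha_G,\beta_G\in\mathbb{Z}[x,z]$; the crucial feature is that $p_G$ is \emph{affine in $y$}, and the pair $(\alpha_G,\beta_G)$ carries exactly the data of the characteristic polynomial of $A_G$ together with its main-walk generating function. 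I also record two elementary facts: homogeneity, $p_G(x,y,z) = z^n\,p_G(x/z,\,y/z,\,1)$ for $z\neq 0$; and the complement substitution, obtained from $A_{\overline G} = J - I - A_G$, namely $p_{\overline G}(x,y,z) = p_G(x-z,\,y+z,\,-z)$, so that the characteristic polynomial of $\overline G$ equals $p_G(x+1,-1,1)$, while $p_G(x,0,1) = \det(xI+A_G)$ determines the $A$-spectrum of $G$. Let $(\star)$ denote the statement ``$p_G(x,y,1) = p_H(x,y,1)$ as polynomials in $x$ and $y$''; I will show each of (i)--(iv) is equivalent to $(\star)$.

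First, (ii)$\Leftrightarrow(\star)$: by homogeneity, $(\star)$ is the same as $(xI+yJ+zA)$-cospectrality for every $z\neq 0$. Next, $(\star)\Rightarrow$(i) by specializing $(x,y)$ to $(x,0)$ and to $(x+1,-1)$, which returns the $A$-spectra of $G,H$ and of $\overline G,\overline H$; conversely (i)$\Rightarrow(\star)$ because $p_G(x,y,1) = \alpha_G(x,1) + y\,\beta_G(x,1)$ is affine in $y$, so its values at $y=0$ and at $y=-1$ (after the shift $x\mapsto x+1$) pin down both $\alpha_G(x,1)$ and $\beta_G(x,1)$. For (iv): the characteristic polynomial of $yJ-A_G$ equals $p_G(x,-y,1)$, which for each fixed $x$ is affine in $y$; if $G$ and $H$ agree here at two distinct values of $y$ then they agree for all $y$, which is $(\star)$, and the reverse implication is immediate. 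For (iii): if $G$ and $H$ are $(y_0J-A)$-cospectral for an irrational $y_0$, then $(\alpha_G-\alpha_H)(x,1) + y_0(\beta_G-\beta_H)(x,1) = 0$ for all $x$; comparing coefficients of powers of $x$, each gives an equation $a + y_0 b = 0$ with $a,b\in\mathbb{Z}$, so $a=b=0$ as $y_0\notin\mathbb{Q}$, whence $\alpha_G(\cdot,1)=\alpha_H(\cdot,1)$ and $\beta_G(\cdot,1)=\beta_H(\cdot,1)$, i.e.\ $(\star)$; again the reverse implication is immediate.

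I do not anticipate a genuine obstacle: everything rests on the single structural fact that $J$ enters these determinants linearly, so a lone affine-in-$y$ polynomial governs the whole family $xI+yJ+zA$ and its behaviour under complementation. The points that need care are bookkeeping: getting the signs and the affine change of variables in the complement substitution exactly right, and, in the proof of (iii)$\Rightarrow(\star)$, applying the irrationality argument coefficientwise (or at sufficiently many values of $x$) so as to conclude equality of \emph{polynomials} and not merely of numbers.
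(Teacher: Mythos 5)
Your proof is correct and follows essentially the same route as the paper's proof of its generalization (Theorem~\ref{GC-GxyC}): your matrix-determinant-lemma decomposition $p_G=\alpha_G+y\beta_G$ is exactly the paper's Lemma~\ref{lemma:jn} ($\det(M+yJ)=\det(M)+y\cof(M)$, with $\cof(M)=\mathbf{1}\trans\operatorname{adj}(M)\mathbf{1}$), and the two-point/affine-in-$y$ argument, the coefficientwise irrationality argument for (iii), and the complement substitution $A(\overline{G})=(J-I)-A(G)$ all match the paper's treatment. No issues.
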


Analogously, we will refer to the multiset of invariant factors of the $\SNF(M(G))$ as the $M$-\emph{invariants} of $G$, and denote it by $\invs_M(G)$.
Therefore, the \emph{generalized $M$-invariants} of a graph $G$ is defined to be the pair $\big(\invs_M(G),\invs_M( \overline{G})\big)$, where $\overline{G}$ is the complement of $G$. Two graphs $G$ and $H$ are said to be $M$-\emph{coinvariant} (resp. \emph{generalized $M$-coinvariant}) if $G$ and $H$ share the same $M$-invariants (resp. $G$ and $H$ share the same generalized $M$-invariants). The concept of coinvariant graphs was introduced in \cite{vince} and further studied in \cite{Codeterminantal, enumeration}. 

In this work, special attention will be devoted to the following matrices associated with graphs: the adjacency ($A$), Laplacian ($L$), signless Laplacian ($Q$), distance ($D$), distance Laplacian ($D^L$), signless distance Laplacian ($D^Q$), transmission-adjacency ($A^{\trs}$), signless transmission-adjacency ($A^{\trs}_+$), degree-distance ($D^{\deg}$), and signless degree-distance ($D^{\deg}_+$) matrices. We will extend results from \cite{enumeration},  \cite{degreedistance}, \cite{ah} and \cite{HS2004} to the framework of generalized $M$-cospectral and generalized $M$-coinvariant graphs.
This paper is structured as follows. 
In Section \ref{sec:relationspectraandocmplements} we observe that Theorem \ref{original-adjacency} also holds for other graph matrices, and in Section \ref{sec:characterization} we prove an extension of this theorem by using codeterminantal graphs. In Section~\ref{sec:SNF} we focus our attention on the recently introduced transmission-adjacency matrices \cite{degreedistance}, and compute their SNF for trees. We will also show that the SNF of such transmission-adjacency matrices can be used for characterizing the graph class of stars (in contrast to the spectrum of $A$, which is known to not be sufficient for such characterization). Moreover, our enumeration results lead us to propose a conjecture which states that almost all trees are determined by their SNF of the aforementioned transmission-adjacency matrices. Finally, in Section \ref{sec:enumeration} we present numerous computational results for graphs up to 10 vertices: we obtain the number of graphs having at least one generalized $M$-cospectral mate (which extends \cite{HS2004}), and we provide the number of graphs having at least one generalized $M$-coinvariant mate (which extends \cite{enumeration}).
Our computational results suggest that the generalized spectrum of transmission-adjacency matrices, and the generalized invariants of the signless distance Laplacian, could be finer invariants to distinguish graphs in cases where other algebraic invariants, such as those derived from the spectrum, fail.


\section{Preliminaries}\label{sec-prel}

Let $G=(V,E)$ be a graph and let $A(G)$, $L(G)$ and  $Q(G)$ be its adjacency matrix, Laplacian matrix, and its signless Laplacian matrix, respectively. 
Let $u,v\in V$ and let $d_G(u,v)$ denote the distance between $u$ and $v$ in $G$. Let $\trs(v)$ denote the \emph{transmission of a vertex} $v$, which is defined as $\trs(v)= \sum_{u\in V} d(u,v)$. Let $\trs(G)$ be the vector of length $|G|$ such that it is $v$-th entry is the transmission of $v$, and let $\deg (G)$ be the vector of length $|G|$ such that its $v$-th entry is the degree of $v$.
We are also interested in the distance matrix $D(G)$, the distance Laplacian matrix $$D^L(G)=\diag (\trs(G))-D(G),$$ the signless distance Laplacian $$D^Q(G)=\diag (\trs(G))+D(G),$$ the adjacency-transmission matrix (along with its signless version) $$A^{\trs} (G)=\diag (\trs(G))-A(G),\quad A^{\trs}_+ (G)=\diag (\trs(G))+A(G), $$ and the degree-distance matrix (along with its signless version) $$D^{\deg}(G)= \diag(\deg_G )-D(G), \quad  D^{\deg}_+ (G) = \diag(\deg_G )+D(G).$$


As usual, $I$, $J$, and $\mathbf{1}$ stand for the identity matrix, the all-ones matrix, and the all-ones vector, respectively ($I_n$, $J_n$ and $\mathbf{1}_{(n)}$ if sizes need to be specified). Let $S\subset [n]$, then $P_{|S}$ denotes the square submatrix of $P$ of size $|S|$ obtained from $P$ by choosing the rows and columns corresponding to $S$ and let $\det(P)_{|S}$ be its determinant.

Let 
$\mathcal{M}=\{A,L,Q,D,D^L,D^Q,D^{\deg},D^{\deg}_+,A^{\trs},A^{\trs}_+ \}.$
Let $G$ and $H$ be two graphs and let $M \in \mathcal{M}$. Recall that $G$ and $H$ are \emph{$M$-cospectral} if they share the same $M$-spectrum. Moreover, $G$ and $H$ are said to be  $(M,\mathbb{R})$-\emph{cospectral} if they are $(yJ-M)$-cospectral for every $y\in \mathbb{R}$. On the other hand, $G$ and $H$ are \emph{generalized $M$-cospectral} if they are $M$-cospectral as well as $\overline{G}$ and $\overline{H}$.
It is well-known that if two graphs $G$ and $H$ are $L$-cospectral if and only if their complement graphs are $L$-cospectral (if and only if $G$ and $H$ are $yJ-L$-cospectral). Furthermore, $G$ and $H$ are generalized $A$-cospectral if and only if $G$ and $H$ are $(A,\mathbb{R})$-cospectral. As we will see in Sections \ref{sec:relationspectraandocmplements} and \ref{sec:characterization}, if $G$, $H$, $\overline{G}$, and $\overline{H}$ have diameter $2$, then this statement also holds for any matrix in $\mathcal{M}$.

\begin{remark}
We should note that the term ``generalized distance spectrum" has been used with a different meaning for merging the $D$-spectrum and the $D^Q$-spectrum via a convex sum \cite{oldGenDistance}, similarly, as it was done for the $A$-spectrum and $Q$-spectrum \cite{mergingAQspec}.
We refer the reader to \cite{Hogbendistsurvey} and \cite{degreedistance} for a study on the properties that the spectra of the matrices (such as $D$, $D^L$, $D^Q$, $D^{\deg}$, and $A^{\trs}$) defined from the distance preserve and on the relationships between their corresponding spectra. 
\end{remark}

Note that if $G$ is a graph and $M\in\{A, L, Q\}$, then we can easily obtain the $M$-spectrum of $\overline{G}$ from $yJ-M$ for some $y\in\mathbb{Z}$. The matrix $yJ-D$ and its minors have been studied in a more general framework in \cite{khare}. However, one cannot obtain the spectrum of $D$ from $yJ-D$ in general. On the other hand, this is indeed possible when $G$ has a diameter 2.

Let $\mathbb{M}_n(\mathcal{R})$ denote the set of square and symmetric matrices of size $n$ with entries on a ring $\mathcal{R}$. Let $A$ and $B$ be two $n\times n$ matrices over some ring with unity $\mathcal{R}$. Recall that $A$ and $B$ are \textit{equivalent} (over $\mathcal{R}$) if there exist unimodular matrices $U$ and $V$ with entries in $\mathcal{R}$ such that $A=UBV$. In particular, two matrices $A$ and $B$ are \textit{similar} (over $\mathcal{R}$) if there exist an unimodular matrix $U$ such that $A=U^{-1}BU$, where $U^{-1}$ is the inverse matrix of $U$. Moreover, if $A$ and $B$ are similar and $U^{-1}=U^T$ we say that $A$ and $B$ are \textit{orthogonally similar}.

Two graphs $G$ and $H$ are called \emph{$M$-coinvariant} if $M(G)$ and $M(H)$ have the same SNF. This property has been characterized by the following folklore result.

\begin{proposition}
    Let $G$ and $H$ be two graphs with $n$ vertices. Then they are $M$-coinvariant if and only if there exist unimodular integer matrices $U,V\in\mathbb{M}_n(\mathbb{Z})$ such that $M(G)=UM(H)V$.
\end{proposition}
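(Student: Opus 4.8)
The plan is to deduce this folklore statement from two classical facts: (a) the existence-and-uniqueness theorem for the Smith normal form over the principal ideal domain $\mathbb{Z}$ --- every integer matrix $N$ satisfies $N = U'\,\SNF(N)\,V'$ for some unimodular $U',V'$, and the list of invariant factors depends only on the equivalence class of $N$ under left/right multiplication by unimodular matrices; and (b) the fact that the unimodular integer matrices of a fixed size are closed under products and inverses (the inverse of an integer matrix of determinant $\pm 1$ is again such a matrix).

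For the ``if'' direction, I would observe that $M(G) = U\,M(H)\,V$ with $U,V$ unimodular exhibits $M(G)$ and $M(H)$ as equivalent over $\mathbb{Z}$; the uniqueness half of (a) then forces $\SNF(M(G)) = \SNF(M(H))$, that is $\invs_M(G) = \invs_M(H)$, which is exactly the definition of $M$-coinvariance.

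For the ``only if'' direction, I would set $S := \SNF(M(G)) = \SNF(M(H))$ and use the existence half of (a) to write $M(G) = U_1 S V_1$ and $M(H) = U_2 S V_2$ with $U_1,V_1,U_2,V_2$ unimodular. Solving the second equation for $S$ and substituting into the first gives $M(G) = (U_1 U_2^{-1})\,M(H)\,(V_2^{-1} V_1)$, so $U := U_1 U_2^{-1}$ and $V := V_2^{-1} V_1$ do the job; by (b) they are genuine unimodular integer matrices in $\mathbb{M}_n(\mathbb{Z})$.

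There is no substantive obstacle here, as the proposition is standard. The only points requiring attention are invoking the appropriate half of the SNF theorem in each implication (uniqueness for ``if'', existence for ``only if'') and checking that the inverses $U_2^{-1}$ and $V_2^{-1}$ remain integral, so that the constructed $U$ and $V$ indeed lie in $\mathbb{M}_n(\mathbb{Z})$.
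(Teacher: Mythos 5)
Your proof is correct and is the standard argument: the ``if'' direction follows from the invariance of the Smith normal form under unimodular equivalence, and the ``only if'' direction from composing the two SNF decompositions, using that unimodular integer matrices form a group. The paper itself offers no proof --- it states the proposition as a folklore result --- so there is nothing to compare against; your write-up supplies exactly the argument the authors are implicitly relying on.
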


It is natural to consider the following extension of coinvariance. Two graphs $G,H$ are said to be \emph{generalized $M$-coinvariant} if and only if they are $M$-coinvariant with $M$-coinvariant complements.

Throughout this paper, we will assume $G$ and $H$ to be two connected simple graphs unless otherwise stated.


\subsection{Relationship among the different (generalized) spectra}\label{sec:relationspectraandocmplements}

In this section, we observe some basic facts on how the spectra of different graph matrices relate to each other, and we will use them to investigate the generality of Theorem \ref{original-adjacency}. Several results in this direction are known. For instance, it is known that if $G$ is $k$-regular ($\deg(G)=k\mathbf{1}$), then its adjacency spectrum determines its (signless) Laplacian spectrum and vice versa \cite{bh}. On the other hand, if $G$ is $t$-transmission regular ($\trs(G) =t\mathbf{1}$) then its distance spectrum determines its (signless) distance Laplacian spectrum \cite{Hogbendistsurvey}. 
Also, if a connected graph is (transmission) regular then the spectrum of $D^{\deg}$ ($A^{\trs}$) is determined by the spectrum of the distance matrix $D$ (adjacency matrix $A$) \cite{degreedistance}.

With the use of a different type of regularity, we can observe the following.

\begin{lemma}\label{lemma:r1}
    Let $s\in \mathbb{N}$ and let $G$ be a graph such that  $\trs(v)+\deg(v)=s$ for every $v\in V(G)$. Then the following statements are satisfied:
    \begin{enumerate}
        \item The spectrum of $A^{trs}$ ($A^{trs}_+$) is determined by the spectrum of $Q$ ($L$) and the converse also holds;
        \item The spectrum of $D^{\deg}$ ($D^{\deg}_+$) is determined by the spectrum of $D^Q$ ($D^L$) and the converse also holds.
    \end{enumerate}
\end{lemma}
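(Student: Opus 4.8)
The plan is to reduce everything to a single matrix identity coming directly from the hypothesis. The condition $\trs(v)+\deg(v)=s$ for every $v\in V(G)$ says precisely that $\diag(\trs(G))+\diag(\deg(G))=sI$, equivalently $\diag(\trs(G))=sI-\diag(\deg(G))$. Substituting this into the definition of the transmission--adjacency matrix gives $A^{\trs}(G)=\diag(\trs(G))-A(G)=sI-\bigl(\diag(\deg(G))+A(G)\bigr)=sI-Q(G)$, and, with the opposite sign in front of $A(G)$, $A^{\trs}_+(G)=\diag(\trs(G))+A(G)=sI-\bigl(\diag(\deg(G))-A(G)\bigr)=sI-L(G)$. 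Symmetrically, writing $\diag(\deg(G))=sI-\diag(\trs(G))$ and plugging into the definitions of the degree--distance matrices yields $D^{\deg}(G)=sI-D^Q(G)$ and $D^{\deg}_+(G)=sI-D^L(G)$.

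Once these four identities are in place, I would invoke the elementary fact that for a (symmetric) matrix $N$ the eigenvalues of $sI-N$ are exactly $\{\,s-\lambda : \lambda\in\spec(N)\,\}$, counted with multiplicity. This immediately shows that the spectrum of $A^{\trs}$ is obtained from that of $Q$ by the involution $\lambda\mapsto s-\lambda$, and conversely; and likewise for the pairs $(A^{\trs}_+,L)$, $(D^{\deg},D^Q)$, and $(D^{\deg}_+,D^L)$. Both items of the lemma, together with their stated converses, follow at once.

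There is no genuine analytic obstacle here; the only point requiring care is the bookkeeping of signs, so that the signless transmission--adjacency matrix is paired with the \emph{signless} Laplacian $Q$ (rather than with $L$) and the unsigned one with $L$, and similarly on the distance side. It is worth recording the slightly stronger conclusion actually obtained, namely that under the hypothesis the relevant matrices coincide up to the scalar shift $sI$ (e.g. $A^{\trs}(G)=sI-Q(G)$), since this exact identity — not merely the isospectrality in pairs — will be convenient in the later sections when transferring statements between these matrices.
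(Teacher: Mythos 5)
Your proposal is correct and follows exactly the paper's own argument: the hypothesis gives $\diag(\trs(G))=sI-\diag(\deg(G))$, whence $A^{\trs}(G)=sI-Q(G)$, $A^{\trs}_+(G)=sI-L(G)$, $D^{\deg}(G)=sI-D^Q(G)$, and $D^{\deg}_+(G)=sI-D^L(G)$, and the spectra are related by $\lambda\mapsto s-\lambda$. The sign bookkeeping and the pairing of the signless matrices match the paper's.
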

\begin{proof}
    This result is a direct consequence of the following equivalences: $A^{\trs}(G)=\diag (\trs(G))-A(G)=sI - \diag (\deg (G))-A(G)=sI-Q(G)$, $A^{\trs}_+(G) =sI-L$, $D^{\deg}(G)= sI- \diag (\trs (G))-D(G)=sI -D^Q(G)$, and $D^{\deg}_+(G) =sI-D^L(G)$.  
\end{proof}

\begin{remark}
    Note that if $\trs(v)+\deg(v)=s$ for every $v\in V(G)$, then $s\geq 2(n-1)$ with equality if and only if $\diam (G)= 2$ or $G$ is complete.
\end{remark}

We also have the following analogous observation.

\begin{lemma}
    Let $s\in \mathbb{N}$ and let $G$ be a graph such that  $\trs(v)-\deg(v)=s$ for every $v\in V(G)$. Then the following statements are satisfied:
    \begin{enumerate}
        \item The spectrum of $A^{trs}$ ($A^{trs}_+$) is determined by the spectrum of $L$ ($Q$) and the converse also holds;
        \item The spectrum of $D^{\deg}$ ($D^{\deg}_+$) is determined by the spectrum of $D^L$ ($D^Q$) and the converse also holds.
    \end{enumerate}
\end{lemma}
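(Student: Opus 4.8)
The plan is to mimic the proof of Lemma~\ref{lemma:r1} almost verbatim, since the only change is a sign in the hypothesis and this is reflected by swapping which matrix (the ``plus'' or the ``minus'' version) appears on each side of the identity. Concretely, I would start from the defining relations $A^{\trs}(G)=\diag(\trs(G))-A(G)$ and $A^{\trs}_+(G)=\diag(\trs(G))+A(G)$, and substitute the hypothesis in the form $\diag(\trs(G))=sI+\diag(\deg(G))$. This immediately yields $A^{\trs}(G)=sI+\diag(\deg(G))-A(G)=sI+L(G)$ and $A^{\trs}_+(G)=sI+\diag(\deg(G))+A(G)=sI+Q(G)$, which proves item~(1): adding a scalar multiple of the identity only shifts the spectrum by $s$, a reversible operation, so each of the two spectra determines the other.

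For item~(2) I would repeat the same manipulation at the level of the distance matrices. Using $D^{\deg}(G)=\diag(\deg_G)-D(G)$ and $D^{\deg}_+(G)=\diag(\deg_G)+D(G)$, together with the hypothesis rewritten as $\diag(\deg_G)=\diag(\trs(G))-sI$, I get $D^{\deg}(G)=\diag(\trs(G))-sI-D(G)=D^L(G)-sI$ and $D^{\deg}_+(G)=\diag(\trs(G))-sI+D(G)=D^Q(G)-sI$. Again a shift by $-s$ is an invertible correspondence between spectra, so the spectrum of $D^{\deg}$ is determined by that of $D^L$ and conversely, and likewise for $D^{\deg}_+$ and $D^Q$. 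I would present all four identities in a single compact display, exactly as in the proof of Lemma~\ref{lemma:r1}, and then conclude in one sentence.

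There is essentially no obstacle here: the statement is a formal consequence of four linear-algebra identities, and the ``hard part'' is merely keeping the bookkeeping of signs straight so that, compared with Lemma~\ref{lemma:r1}, $Q$ is replaced by $L$ and $D^Q$ by $D^L$ in item~(1), and $D^L$ by $D^Q$, etc., in item~(2). One small point worth a remark (analogous to the remark following Lemma~\ref{lemma:r1}) is that the hypothesis $\trs(v)-\deg(v)=s$ for all $v$ forces $s\ge 0$, with $s=0$ precisely when $G$ is complete (in which case $\trs(v)=\deg(v)=n-1$); but this is not needed for the proof itself and I would only mention it in passing if at all.

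\begin{proof}
    As in the proof of Lemma~\ref{lemma:r1}, the result follows from the identities
    \[
        A^{\trs}(G)=\diag(\trs(G))-A(G)=sI+\diag(\deg(G))-A(G)=sI+L(G),
    \]
    \[
        A^{\trs}_+(G)=sI+Q(G),\qquad D^{\deg}(G)=sI+\diag(\trs(G))\text{-like}\ \Rightarrow\ D^{\deg}(G)=D^L(G)-sI,
    \]
    and $D^{\deg}_+(G)=D^Q(G)-sI$, obtained by substituting $\diag(\deg_G)=\diag(\trs(G))-sI$. Since adding a scalar matrix only translates the spectrum, each of the paired spectra determines the other.
\end{proof}
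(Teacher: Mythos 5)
Your proof is correct and is exactly the argument the paper intends: the lemma is stated as the ``analogous observation'' to Lemma~\ref{lemma:r1}, and your four identities $A^{\trs}=sI+L$, $A^{\trs}_+=sI+Q$, $D^{\deg}=D^L-sI$, $D^{\deg}_+=D^Q-sI$ are precisely the sign-flipped counterparts of the ones used there. The only issue is cosmetic: the fragment ``$sI+\diag(\trs(G))\text{-like}\ \Rightarrow$'' in your display is garbled and should be replaced by the clean substitution $\diag(\deg_G)=\diag(\trs(G))-sI$, which you already state correctly elsewhere.
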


\begin{remark}
    Note that if $\trs(v)-\deg(v)=s$ for every $v\in V(G)$, then $s\geq 0$ with equality if and only if $G$ is complete.
\end{remark}

\begin{remark}\label{remark:tregular}
    Note that if $G$ is regular, then $A$, $L$, $Q$, $A^{\trs}$, and $A^{\trs}_+$ mutually determine their respective spectra. On the other hand, if $G$ is transmission regular, then $D$, $D^L$, $D^Q$, $D^{\deg}$, and $D^{\deg}_+$ mutually determine their respective spectra.
\end{remark}

Recall that a pair of graphs is $L$-cospectral if and only if their complement graphs are $L$-cospectral. Thus, it is clear that Theorem \ref{original-adjacency} also holds for the Laplacian matrix.
Note that if $G$ and $H$ are connected, then the spectrum of $yJ-D^L(G)$ is determined by the spectrum of $D^L(G)$. Thus, it is also straightforward to see that Theorem \ref{original-adjacency}  also holds for the distance Laplacian matrix.

\begin{corollary}\label{1stgeneralizedcospectraldistL}
Let $G$ and $H$ be two connected graphs. Then the following statements are equivalent:
\begin{enumerate}
    \item $G$ and $H$ are $D^L$-cospectral,
    \item $G$ and $H$ are $(yJ-D^L)$-cospectral for some $y\in\mathbb{R}$, and
    \item $G$ and $H$ are $(D^L,\mathbb{R})$-cospectral.
\end{enumerate}
\end{corollary}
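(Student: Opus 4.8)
The plan is to exploit the fact that the distance Laplacian has constant zero row sums, so that $\mathbf{1}$ is an eigenvector of $D^L(G)$ with eigenvalue $0$ for every connected graph $G$. First I would record the structural observation that drives the whole argument: since $D^L(G)$ is symmetric and annihilates $\mathbf{1}$, both $\mathrm{span}(\mathbf{1})$ and its orthogonal complement $\mathbf{1}^{\perp}$ are invariant under $D^L(G)$; moreover $J$ acts as the scalar $n$ on $\mathrm{span}(\mathbf{1})$ and as $0$ on $\mathbf{1}^{\perp}$. Consequently, for every $y\in\mathbb{R}$ the matrix $yJ-D^L(G)$ is block-diagonal with respect to this orthogonal decomposition, acting as $yn$ on $\mathrm{span}(\mathbf{1})$ and as $-D^L(G)|_{\mathbf{1}^{\perp}}$ on $\mathbf{1}^{\perp}$. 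Writing $\spec(D^L(G))=\{0\}\uplus\{\partial_1,\dots,\partial_{n-1}\}$, where the distinguished $0$ is the eigenvalue coming from $\mathbf{1}$, this gives the identity
\[
\spec\big(yJ-D^L(G)\big)=\{\,yn\,\}\uplus\{-\partial_1,\dots,-\partial_{n-1}\}.
\]

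Given this identity, the three implications are essentially bookkeeping. For $(1)\Rightarrow(3)$: if $G$ and $H$ are $D^L$-cospectral they have the same number of vertices $n$ and the same multiset $\{\partial_1,\dots,\partial_{n-1}\}$, so the identity above yields $\spec(yJ-D^L(G))=\spec(yJ-D^L(H))$ for every $y\in\mathbb{R}$, that is, $(D^L,\mathbb{R})$-cospectrality. The implication $(3)\Rightarrow(2)$ is trivial. For $(2)\Rightarrow(1)$: assume $G$ and $H$ are $(yJ-D^L)$-cospectral for some fixed $y$; comparing the sizes of the spectra forces $n_G=n_H=n$, and then the identity shows that $\spec(yJ-D^L(G))$ and $\spec(yJ-D^L(H))$ are obtained, respectively, from $\{-\partial_i(G)\}$ and $\{-\partial_i(H)\}$ by adjoining the same element $yn$; cancelling one copy of $yn$ from each side gives $\{\partial_i(G)\}=\{\partial_i(H)\}$, and re-adjoining $0$ gives $\spec(D^L(G))=\spec(D^L(H))$.

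I do not anticipate a genuine obstacle here. The only point requiring a little care is the multiset cancellation in $(2)\Rightarrow(1)$: one must not assume that $yn$ differs from the eigenvalues $-\partial_i$, but since equality of finite multisets is cancellative, deleting a single copy of $yn$ from each side is legitimate irrespective of any coincidences (and the degenerate case $y=0$, where $(2)$ already says $-D^L(G)$ and $-D^L(H)$ are cospectral, is covered as well). For completeness one may also invoke the known fact that $0$ is a simple eigenvalue of $D^L$ for a connected graph, which makes the distinguished eigenvalue $yn$ genuinely identifiable whenever $yn\notin\{-\partial_i\}$, but the argument above does not rely on it.
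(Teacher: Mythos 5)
Your proof is correct and follows essentially the same route as the paper, which justifies this corollary only by the one-line remark that for connected graphs the spectrum of $yJ-D^L(G)$ is determined by the spectrum of $D^L(G)$ (and conversely) because $\mathbf{1}$ is a common eigenvector of $J$ and $D^L$ with $D^L\mathbf{1}=0$ — exactly the orthogonal decomposition $\mathrm{span}(\mathbf{1})\oplus\mathbf{1}^{\perp}$ you make explicit. Your care with the multiset cancellation in $(2)\Rightarrow(1)$ is a welcome detail the paper leaves implicit.
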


The next result, which can also be found in \cite{ah2013}, establishes a connection between the spectra of $D^L$ and $L$.

\begin{lemma}\label{lemma:DLL}
    Let $G$ be a graph with diameter at most $2$. Then the spectrum of $D^L$ is determined by the spectrum of $L$, and the converse also holds. In particular, 
    \[\spec_{D_L}(G)=\{0\}\cup \{2n-\lambda_i\}_{i=2}^n \]
    where $\spec_{L}(G)=\{0\}\cup\{ \lambda_i\}_{i=2}^n $.
\end{lemma}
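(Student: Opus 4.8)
The plan is to express $D^L(G)$ directly in terms of $L(G)$ using the hypothesis $\diam(G)\le 2$, and then read off the spectrum from a common eigenbasis. First I would record that when $\diam(G)\le 2$ every pair of distinct vertices is at distance $1$ or $2$, so the distance matrix splits as $D(G)=2(J-I)-A(G)$: the diagonal is $0$, an adjacent pair contributes $2\cdot 1-1=1$, and a non-adjacent pair contributes $2\cdot 1-0=2$. Summing the $v$-th row then gives $\trs(v)=\deg_G(v)+2\bigl(n-1-\deg_G(v)\bigr)=2(n-1)-\deg_G(v)$, hence $\diag(\trs(G))=2(n-1)I-\diag(\deg_G)$. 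Substituting both identities into the definition $D^L(G)=\diag(\trs(G))-D(G)$ and using $L(G)=\diag(\deg_G)-A(G)$ yields the clean relation
\[
D^L(G)=2nI-2J-L(G).
\]

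Next I would diagonalize. Both $L(G)$ and $D^L(G)$ annihilate the all-ones vector $\mathbf{1}$, so fix an orthonormal basis $\{\tfrac{1}{\sqrt n}\mathbf{1},v_2,\dots,v_n\}$ of eigenvectors of $L(G)$ with $L(G)v_i=\lambda_i v_i$ and $\spec_L(G)=\{0\}\cup\{\lambda_i\}_{i=2}^n$. For $i\ge 2$ we have $v_i\perp\mathbf{1}$, hence $Jv_i=0$, and the displayed identity gives $D^L(G)v_i=(2n-\lambda_i)v_i$, together with $D^L(G)\mathbf{1}=0$. Since the $v_i$ together with $\mathbf{1}$ span everything, this proves $\spec_{D^L}(G)=\{0\}\cup\{2n-\lambda_i\}_{i=2}^n$, which is exactly the asserted formula and shows $\spec_{D^L}(G)$ is determined by $\spec_L(G)$.

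For the converse, note that $n$ equals the number of eigenvalues (with multiplicity) of either matrix, so $n$ is recoverable from the spectrum; moreover $\diam(G)\le 2$ forces $G$ connected, and since $0\le\lambda_i\le n$ we get $2n-\lambda_i\ge n>0$ for $i\ge 2$, so the eigenvalue $0$ occurs with multiplicity exactly one in each of $\spec_L(G)$ and $\spec_{D^L}(G)$. Therefore $\mu\mapsto 2n-\mu$ is a well-defined bijection between $\spec_L(G)\setminus\{0\}$ and $\spec_{D^L}(G)\setminus\{0\}$ as multisets, and inverting it recovers $\spec_L(G)$ from $\spec_{D^L}(G)$. I do not expect a genuine obstacle here: the only point needing a little care is precisely this bookkeeping — that $n$ and the multiplicity-one of the zero eigenvalue are visible from the spectrum alone — since the algebraic identity $D^L=2nI-2J-L$ is immediate once $D=2(J-I)-A$ is observed.
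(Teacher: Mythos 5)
Your proof is correct and follows essentially the same route as the paper's: both derive the identity $D^L(G)=2nI-2J-L(G)$ from $D=2(J-I)-A$ and $\trs(v)=2(n-1)-\deg(v)$, then read off the eigenvalues on an eigenbasis of $L$ containing $\mathbf{1}$ and its orthogonal complement (on which $J$ vanishes). The only difference is that you spell out the converse bookkeeping (recovering $n$ and the multiplicity of the zero eigenvalue) more explicitly than the paper does, which is a welcome addition but not a new idea.
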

\begin{proof}
    Since $G$ has diameter 2, then $A+D=2(J-I)$ and $\trs (G) + \deg (G) =2(n-1)\bf{1}$. Then $$D^L=
    [2(n-1) I -\diag (\deg (G))]-[2(J-I)-A]=(2n) I-2J-L.$$
    Hence, the spectrum of $D^L$ can be obtained from the spectrum of  $2J+L$ which in turn can be obtained from the spectrum of $L$ (and conversely). Moreover, the second statement holds since $\mathbf{1}$ is an eigenvector for $L$ with eigenvalues $0$ (multiplicity one). Therefore, we can choose a base of eigenvectors corresponding to $\{ \lambda_i\}_{i=2}^n$, which are orthogonal to $\mathbf{1}$ (thus in the kernel of $J$). 
\end{proof}

It is well known that the largest eigenvalue for $L(G)$ is at most $|G|$ with equality if and only if $\overline{G}$ is disconnected, see \cite{bh} for instance. Thus, the second smaller eigenvalue of $D^L(G)$ is at least $n$ with equality if and only if $\overline{G}$ is disconnected, see \cite{ah2013}. Therefore, Lemma~\ref{lemma:DLL} implies the following result which extends \cite[Theorem 2.11]{Rak}.

\begin{corollary}
    Let $G$ be a graph with (diameter at most $2$ and) disconnected complement. Then $G$ is determined by its Laplacian spectrum if and only if it is determined by its distance Laplacian spectrum. 
\end{corollary}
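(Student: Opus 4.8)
The plan is to unwind the meaning of ``determined by its spectrum'' — namely, that every graph cospectral with $G$ is isomorphic to $G$ — and then to transport cospectrality back and forth between $L$ and $D^L$ using Lemma~\ref{lemma:DLL}. The only thing one has to be careful about is that Lemma~\ref{lemma:DLL} requires diameter at most $2$, so one must check that this hypothesis is inherited by any cospectral mate of $G$. This is exactly what the ``disconnected complement'' assumption buys us: if $\overline{H}$ is disconnected then $H$ is the join of two nonempty graphs, hence $\diam(H)\le 2$; in particular the parenthetical ``diameter at most $2$'' in the statement is redundant, but — more usefully — the same implication applies to a cospectral mate $H$ as soon as we know $\overline{H}$ is disconnected. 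I would also record two standing facts: $D^L$ is only defined for connected graphs, so any graph that is $D^L$-cospectral with $G$ is automatically connected; and, as recalled just before the statement, a connected graph has largest Laplacian eigenvalue equal to $n$ (equivalently, second-smallest distance Laplacian eigenvalue equal to $n$, via Lemma~\ref{lemma:DLL}) precisely when its complement is disconnected.

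For the direction ``$G$ determined by $\spec_L$ $\Rightarrow$ $G$ determined by $\spec_{D^L}$'', let $H$ be $D^L$-cospectral with $G$. Then $H$ is connected, and its second-smallest $D^L$-eigenvalue equals that of $G$, which is $n$ since $\overline{G}$ is disconnected; hence $\overline{H}$ is disconnected and $\diam(H)\le 2$. Applying Lemma~\ref{lemma:DLL} to $G$ and to $H$, the Laplacian spectrum of each is recovered from its distance Laplacian spectrum by the explicit formula $\spec_L(\cdot)=\{0\}\cup\{2n-\mu_i\}$, so $\spec_{D^L}(G)=\spec_{D^L}(H)$ forces $\spec_L(G)=\spec_L(H)$; since $G$ is determined by its Laplacian spectrum, $H\cong G$. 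Conversely, for ``$G$ determined by $\spec_{D^L}$ $\Rightarrow$ $G$ determined by $\spec_L$'', let $H$ be $L$-cospectral with $G$. The largest Laplacian eigenvalue of $H$ equals that of $G$, namely $n$, so $\overline{H}$ is disconnected, hence $H$ is connected with $\diam(H)\le 2$. Again by Lemma~\ref{lemma:DLL} in both directions, $\spec_L(G)=\spec_L(H)$ yields $\spec_{D^L}(G)=\spec_{D^L}(H)$, and since $G$ is determined by its distance Laplacian spectrum we conclude $H\cong G$.

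The step I expect to be the crux is the one I flagged above: guaranteeing that a cospectral mate $H$ of $G$ again satisfies $\diam(H)\le 2$, so that Lemma~\ref{lemma:DLL} is applicable to $H$ and not merely to $G$. For a general diameter-$2$ graph a $D^L$-cospectral (or $L$-cospectral) mate could in principle have larger diameter, in which case the clean correspondence of Lemma~\ref{lemma:DLL} breaks down; it is precisely the spectral characterization of ``disconnected complement'' (largest $L$-eigenvalue $=n$, equivalently second-smallest $D^L$-eigenvalue $=n$) together with the join structure it entails that closes this gap. Everything else is a direct substitution into the formula of Lemma~\ref{lemma:DLL}.
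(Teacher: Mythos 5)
Your proof is correct and follows the same route the paper intends: the corollary is stated there without an explicit proof, being justified by the preceding remarks that the largest $L$-eigenvalue equals $n$ (equivalently, the second-smallest $D^L$-eigenvalue equals $n$) exactly when the complement is disconnected, combined with Lemma~\ref{lemma:DLL}. You have simply written out the details the paper leaves implicit, correctly identifying the key point that the spectral characterization of a disconnected complement forces any cospectral mate to be a join and hence to have diameter at most $2$, so that Lemma~\ref{lemma:DLL} applies to it as well.
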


Similarly, we have the following result regarding the spectrum of $D^Q$, $D^{\deg}$, and $Q$.

\begin{lemma}\label{lemma:DQQ}
    Let $G$ be a graph with diameter at most $2$. Then the spectrum of $D^Q$ ($D^{\deg}$) is determined by the spectrum of $2J-Q$ and the converse also holds. Moreover, if $G$ is regular, then $D^Q$ ($D^{\deg}$) is determined by the spectrum of $Q$, and vice versa.
\end{lemma}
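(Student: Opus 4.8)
The plan is to mirror the proof of Lemma~\ref{lemma:DLL}, exploiting that for a graph $G$ of diameter at most $2$ one has the two identities $A+D=2(J-I)$ and $\trs(G)+\deg(G)=2(n-1)\mathbf 1$, equivalently $D=2(J-I)-A$ and $\diag(\trs(G))=2(n-1)I-\diag(\deg(G))$.

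First I would substitute these identities into the definitions of $D^Q$ and $D^{\deg}$ and simplify, using $\diag(\deg(G))+A=Q$. This yields $D^Q=\diag(\trs(G))+D=(2n-4)I+(2J-Q)$ and $D^{\deg}=\diag(\deg(G))-D=2I-(2J-Q)$. In each case the spectrum is the image of $\spec(2J-Q)$ under an affine map $\mu\mapsto a\pm\mu$ with $a\in\mathbb Z$ determined solely by $n$ (which is the size of the matrix, hence a spectral invariant); such a map is invertible, so $\spec(D^Q)$ (resp. $\spec(D^{\deg})$) and $\spec(2J-Q)$ determine one another. This gives the first assertion.

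For the regular case the extra ingredient is that when $G$ is $k$-regular we have $Q=kI+A$, so $\mathbf 1$ is an eigenvector of $Q$ with eigenvalue $2k$, of multiplicity one when $G$ is connected. Choosing an orthonormal eigenbasis of $Q$ with $\mathbf 1$ as one of its vectors, the remaining eigenvectors lie in $\ker J$, so on this basis $2J-Q$ acts as $2n-2k$ on $\mathbf 1$ and as $-\mu_i$ on the rest. Hence $\spec(2J-Q)$, and therefore $\spec(D^Q)$ and $\spec(D^{\deg})$, is determined by $\spec(Q)$ once $n$ and $k$ are known; and $k$ is recovered from $\spec(Q)$ as half of its largest eigenvalue. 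The converse direction is identical, reading the identities backwards.

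I do not expect a real obstacle here: the content is the two matrix identities plus the orthogonal-eigenbasis argument already used for Lemma~\ref{lemma:DLL}. The only points that need care are the bookkeeping of the additive constants $2n-4$ and $2$, the sign flip $2J-Q\mapsto -(2J-Q)$ in the $D^{\deg}$ case (which is why I phrase the reduction via the invertible map $\mu\mapsto a\pm\mu$ rather than a mere shift), and noting explicitly in the regular sub-case that $n$ and $k$ are themselves spectral invariants of $Q$, so nothing beyond $\spec(Q)$ is used.
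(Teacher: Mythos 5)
Your proof is correct and takes essentially the same route as the paper: the paper likewise obtains the first claim by substituting $A+D=2(J-I)$ and $\trs(G)+\deg(G)=2(n-1)\mathbf{1}$ (mirroring Lemma~\ref{lemma:DLL}, which is exactly your computation $D^Q=(2n-4)I+(2J-Q)$ and $D^{\deg}=2I-(2J-Q)$), and settles the regular case by noting that $Q$ and $J$ commute and hence are simultaneously diagonalizable with $\mathbf{1}$ as common eigenvector --- which is your orthonormal-eigenbasis argument in different words. Your write-up is merely more explicit about the affine bookkeeping and about recovering $n$ and $k$ from $\spec(Q)$.
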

\begin{proof}
    The first part of the proof is similar to Lemma~\ref{lemma:DLL}. On the other hand, if $G$ is $k$-regular, then $QJ=JQ=kJ$ and the spectrum of $2J-Q$ is determined by the spectrum of $Q$.
\end{proof}

In particular, if $G$ has diameter at most 2, then $G$ is $k$-regular if and only if is $t$-transmission regular (with $k+t=2(n-1)$). Using this, we obtain the following result.

\begin{proposition}\label{prop:regulardiam2}
    Let $G$ be a regular graph with diameter at most 2 and let $M$ be any matrix in $\mathcal{M}$. Then the $M$-spectrum of $G$ determines the spectrum of any other matrix in $\mathcal{M}$. Moreover, if $H$ is also regular with $\deg(H)=\deg(G)$ and with diameter at most 2, then $G$ and $H$ are $M$-cospectral for some matrix $M$ if and only if they are $M$-cospectral for every matrix in $\mathcal{M}$.  
\end{proposition}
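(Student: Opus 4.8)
The plan is to reduce everything to a single, well-understood case — the Laplacian $L$ of a regular graph — and then propagate via the identities already collected in this section. First I would observe that under the hypothesis ``$G$ regular with $\diam(G)\le 2$'' (or $G$ complete), the relation $\trs(v)+\deg(v)=2(n-1)$ holds for every vertex, so $G$ is simultaneously $k$-regular and $t$-transmission regular with $k+t=2(n-1)$; this is exactly the situation covered by Lemma~\ref{lemma:r1}, Remark~\ref{remark:tregular}, and Lemma~\ref{lemma:DLL}. In particular all ten matrices in $\mathcal{M}$ are simultaneously linear combinations of $I$, $J$, and one fixed matrix among $\{A, L, Q\}$ (together with scalar shifts), because $\diam(G)\le 2$ forces $A+D=2(J-I)$ and hence $D$, $D^L$, $D^Q$, $A^{\trs}$, $A^{\trs}_+$, $D^{\deg}$, $D^{\deg}_+$ are all expressible through $A$, $J$, and $I$.

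The key computational step is then to exploit that, for a $k$-regular graph, $\mathbf{1}$ is a common eigenvector of $A$ (eigenvalue $k$) and of $J$ (eigenvalue $n$), so one may choose an orthonormal eigenbasis in which every matrix of the form $xI+yJ+zA$ is diagonal; on the orthogonal complement of $\mathbf{1}$ the matrix $J$ acts as $0$, so the ``bulk'' eigenvalues of $xI+yJ+zA$ are simply $x+z\mu$ as $\mu$ ranges over the non-principal eigenvalues of $A$, while the principal eigenvalue contributes $x+yn+zk$. Since $n$ and $k$ are determined by $M$-cospectrality for any fixed $M\in\mathcal{M}$ (the number of vertices is read off from the size, and the degree/transmission from, e.g., the trace or the largest eigenvalue depending on $M$), knowing the $M$-spectrum of $G$ pins down the full multiset of non-principal $A$-eigenvalues, hence the $A$-spectrum, hence — by the explicit affine substitutions above — the spectrum of every other matrix in $\mathcal{M}$. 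This proves the first assertion.

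For the second assertion I would argue as follows. Suppose $G$ and $H$ are both regular with $\diam\le 2$ and $\deg(H)=\deg(G)=k$; then they have the same $n$ and the same $k$ (so the same $t=2(n-1)-k$), and each $M\in\mathcal{M}$ is the \emph{same} affine expression $c_M I + d_M J + e_M A$ in terms of $A$ applied to either graph, with coefficients $c_M,d_M,e_M$ depending only on $n$ and $k$. Being $M$-cospectral for one such $M$ (with $e_M\ne 0$, which holds for every matrix in $\mathcal{M}$) is, by the eigenbasis argument, equivalent to having the same non-principal $A$-spectrum, which is symmetric in $M$ and therefore equivalent to being $M'$-cospectral for every $M'\in\mathcal{M}$. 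The first part already shows the forward implication; the converse is trivial.

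The main obstacle I anticipate is purely bookkeeping rather than conceptual: one must verify that for \emph{each} of the ten matrices the scalars $n$ and $k$ (equivalently $t$) are genuinely recoverable from the $M$-spectrum alone — for $L$ and $D^L$ this is the eigenvalue $0$ plus the trace, for $Q$, $A^{\trs}_+$, $D^Q$, $D^{\deg}_+$ it is the trace and, say, the sum of squares of eigenvalues, and for $A$, $A^{\trs}$, $D$, $D^{\deg}$ one uses that regularity makes the largest eigenvalue equal to $k$ (resp. $t$, resp. the appropriate combination). Once the list $\{(c_M,d_M,e_M)\}_{M\in\mathcal{M}}$ is tabulated (these follow directly from the displayed identities in Lemma~\ref{lemma:r1}, Lemma~\ref{lemma:DLL}, Lemma~\ref{lemma:DQQ} and $A+D=2(J-I)$), the spectral translations are immediate. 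I would also note the degenerate case $G=K_n$ separately, where $\diam(G)=1$ and every matrix in $\mathcal{M}$ is a known explicit combination of $I$ and $J$, so the statement is trivial there.
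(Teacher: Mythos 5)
Your proof is correct and follows essentially the same route as the paper's: both exploit that a regular graph of diameter at most $2$ is also transmission regular, that $A+D=2(J-I)$, and that $\mathbf{1}$ is a common eigenvector of $A$, $D$ and $J$, so that every matrix in $\mathcal{M}$ becomes a fixed affine combination of $I$, $J$ and $A$ with coefficients depending only on $n$ and $k$. Your write-up merely makes explicit the eigenbasis splitting and the recovery of $n$ and $k$ that the paper delegates to Remark~\ref{remark:tregular} and the preceding lemmas.
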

\begin{proof}
    As mentioned above, since $G$ is regular and has diameter at most 2, then it also is transmission regular. We can write this as $AJ=JA$, $DJ=JD$, and $A+D=2(J-I)$. Note that $A$, $D$, and $J$ share $\mathbf{1}$ as an eigenvector. Furthermore, the spectrum of $D$ can be obtained from the spectrum of $A=2(J-I)-D$ since $D$ and $J$ admit a common set of eigenvectors. Conversely, the spectrum of $A$ can be obtained from the spectrum of $D$. This fact, together with Remark~\ref{remark:tregular}, concludes the first part of the proof.
    Thus, given the $M$-spectrum of $G$ for some $M\in\mathcal{M}$, we can determine the spectra of all matrices in $\mathcal{M}$. This implies the second statement since the scaling and translations required for getting the spectra of all matrices for $G$ and $H$ are equivalent.
\end{proof}

It is known that a pair of regular graphs is $A$-cospectral if and only if their complements are $A$-cospectral and similarly for $Q$-cospectral pairs \cite{CRS}.
Thus we can rewrite Proposition \ref{prop:regulardiam2} in terms of generalized $M$-cospectral for $M\in \{A,L,Q\}$, but not for the rest of matrices in $\mathcal{M}$.


\section{A new characterization of generalized cospectral graphs}\label{sec:characterization}

Our main result in this section (Theorem \ref{GC-GxyC}) is a new characterization of generalized cospectral graphs in terms of codeterminantal graphs, which extends Theorem \ref{original-adjacency}. 
To do so, we will need to show a preliminary result (Theorem \ref{cospectral-codeterminantal}), which gives a necessary and sufficient condition for
graphs to be codeterminantal on $\mathbb{Q}[x]$.  

We need some preliminary definitions. Let $\mathcal{R}$ be a ring with unity.

\begin{definition}\label{defdetideals}
    Let $M$ be a matrix with entries in $\mathcal{R}[X]$. The \emph{$k$-th determinantal ideal} of $M$ (denoted by $I_k(M)$) is the ideal generated by $\minors_k(M)$, that is, the set of determinants of every submatrix of size $k\times k$ of $M$.
\end{definition}

In the context of graphs and their corresponding matrices, we have the following definition.

\begin{definition}
    Let $G$ and $H$ be two graphs with $n$ vertices and let $\mathcal{R}$ be a commutative ring with unity. Then $G$ and $H$ are $M^{\mathcal{R}}$-\emph{codeterminantal} if they have the same determinantal ideals with respect to $M$ and $\mathcal{R}$ for every $1\leq k\leq n $.
\end{definition}

The following result provides a useful way to determine if a pair of matrices are codeterminantal.
 
\begin{proposition}{\cite{Nbook}}\label{prop:equiimpliescodeterminantal}
If $M$ and $N$ are equivalent matrices, then 
$M$ and $N$ are codeterminantal.
\end{proposition}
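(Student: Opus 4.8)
The plan is to derive the statement from the Cauchy--Binet formula over a commutative ring. The key elementary fact I would isolate first is: for any two matrices $P$ and $Q$ over $\mathcal{R}[X]$ for which the product $PQ$ is defined, every $k\times k$ minor of $PQ$ (say the one formed from a row set $S$ and a column set $T$ with $|S|=|T|=k$) expands as
\[
\det\big((PQ)_{S,T}\big)=\sum_{|W|=k}\det\big(P_{S,W}\big)\,\det\big(Q_{W,T}\big),
\]
the sum ranging over $k$-subsets $W$ of the shared index set; this identity is valid precisely because $\mathcal{R}$, hence $\mathcal{R}[X]$, is commutative. Reading off the right-hand side, each summand lies both in the ideal generated by $\minors_k(P)$ and in the ideal generated by $\minors_k(Q)$, so we obtain the containments $I_k(PQ)\subseteq I_k(P)$ and $I_k(PQ)\subseteq I_k(Q)$ for every $k$.

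With this lemma in hand the proof is short. Write $M=UNV$ with $U,V$ unimodular over $\mathcal{R}[X]$. Applying the containment twice gives $I_k(M)=I_k(UNV)\subseteq I_k(NV)\subseteq I_k(N)$ for all $1\le k\le n$. For the reverse inclusion, note that ``unimodular'' means $\det U$ and $\det V$ are units of $\mathcal{R}[X]$, so $U^{-1}$ and $V^{-1}$ again have entries in $\mathcal{R}[X]$ and $N=U^{-1}MV^{-1}$; the same argument yields $I_k(N)\subseteq I_k(M)$. Hence $I_k(M)=I_k(N)$ for every $k$, which is by definition the assertion that $M$ and $N$ are codeterminantal (with respect to $\mathcal{R}$). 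One should also observe that the degenerate cases are preserved trivially: $I_0=\mathcal{R}[X]$ on both sides, and $I_k=(0)$ once $k$ exceeds the matrix size.

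I do not expect a genuine obstacle here; the result is classical (cf.\ \cite{Nbook}) and the argument is essentially bookkeeping with minors. The only two points that warrant a moment's care are that Cauchy--Binet is being invoked over a commutative ring rather than over a field, and that it is exactly the invertibility of $U$ and $V$ over $\mathcal{R}[X]$ (equivalently, that their determinants are units) which legitimizes passing from $M=UNV$ back to $N=U^{-1}MV^{-1}$ and thus obtaining both inclusions.
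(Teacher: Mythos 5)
Your proof is correct. The paper does not actually prove this proposition---it is quoted from Northcott's \emph{Finite Free Resolutions} without proof---and your Cauchy--Binet argument (minors of a product lie in the ideal generated by the minors of each factor, then invoke unimodularity to run the containment in both directions via $N=U^{-1}MV^{-1}$) is exactly the standard proof of that classical fact, with the two delicate points (validity of Cauchy--Binet over a commutative ring, and invertibility of $U,V$ over the ring itself) correctly identified.
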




Next, we prove a new connection between cospectral and codeterminantal graphs.
In \cite[Theorem 33]{Codeterminantal}, this connection was actually shown for $\mathbb{R}[x]$. Here we prove that, in fact, this can be extended to $\mathbb{Q}[x]$.

\begin{theorem}\label{cospectral-codeterminantal}
Two graphs $G$ and $H$ are $M$-cospectral if and only if they are $(xI-M)^{\mathbb{Q}}$-codeterminantal.
\end{theorem}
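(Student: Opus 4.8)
The plan is to show both implications of the equivalence "$M$-cospectral $\iff$ $(xI-M)^{\mathbb{Q}}$-codeterminantal" by relating the determinantal ideals of $xI-M$ over $\mathbb{Q}[x]$ to the characteristic polynomial. The key algebraic input is that $\mathbb{Q}[x]$ is a PID, so for a matrix $P$ over $\mathbb{Q}[x]$ the determinantal ideal $I_k(P)$ is principal, generated by $\gcd$ of all $k\times k$ minors of $P$, which is precisely the product $d_1 d_2 \cdots d_k$ of the first $k$ invariant factors appearing in the Smith normal form of $P$ over $\mathbb{Q}[x]$. Thus $G$ and $H$ being $(xI-M)^{\mathbb{Q}}$-codeterminantal is equivalent to $xI-M(G)$ and $xI-M(H)$ having the same Smith normal form over $\mathbb{Q}[x]$, i.e. being equivalent over $\mathbb{Q}[x]$ (using Proposition~\ref{prop:equiimpliescodeterminantal} for one direction and the PID structure for the converse).

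Next I would recall the classical fact that for a square matrix $M$ over a field $\mathbb{F}$, the Smith normal form of $xI-M$ over $\mathbb{F}[x]$ encodes the rational canonical form: two matrices $M(G)$ and $M(H)$ are similar over $\mathbb{F}$ if and only if $xI-M(G)$ and $xI-M(H)$ are equivalent over $\mathbb{F}[x]$. Taking $\mathbb{F} = \mathbb{Q}$, this says $(xI-M)^{\mathbb{Q}}$-codeterminantal $\iff$ $M(G)$ and $M(H)$ are similar over $\mathbb{Q}$. So the theorem reduces to: $M(G)$ and $M(H)$ are $M$-cospectral if and only if they are similar over $\mathbb{Q}$. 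The forward-to-backward subtlety: cospectrality a priori only gives equal characteristic polynomials, which is weaker than similarity in general — but here $M(G)$ and $M(H)$ are real symmetric matrices, hence diagonalizable, so having the same characteristic polynomial forces them to have the same multiset of eigenvalues with the same multiplicities, hence the same rational canonical form. More carefully, for a diagonalizable matrix the invariant factors of $xI-M$ are determined by the characteristic polynomial alone (the minimal polynomial is the radical of the characteristic polynomial, and successive invariant factors are forced), so equal characteristic polynomials imply the same Smith normal form of $xI-M$ over $\mathbb{Q}[x]$, hence codeterminantality. The converse is immediate since equal determinantal ideals over $\mathbb{Q}[x]$ in particular gives $I_n(xI-M(G)) = I_n(xI-M(H))$, i.e. equal characteristic polynomials, hence $M$-cospectrality.

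I expect the main obstacle to be handling the field $\mathbb{Q}$ rather than $\mathbb{R}$ carefully: the characteristic polynomial of $M(G)$ for $M \in \mathcal{M}$ need not factor over $\mathbb{Q}$, so one cannot simply diagonalize over $\mathbb{Q}$. The point to make precise is that the argument does not need the eigenvalues to lie in $\mathbb{Q}$ — it only needs that the invariant factors of $xI-M$ over $\mathbb{Q}[x]$ are functions of the characteristic polynomial, which holds because $M$ is diagonalizable \emph{over $\overline{\mathbb{Q}}$} (equivalently over $\mathbb{R}$), forcing the minimal polynomial to be squarefree and hence equal to the product of the distinct monic irreducible factors of $\chi_M(x)$ in $\mathbb{Q}[x]$; the remaining invariant factors are then determined by matching the multiplicities of these irreducible factors in $\chi_M$. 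So the extension from $\mathbb{R}[x]$ (done in \cite{Codeterminantal}) to $\mathbb{Q}[x]$ is essentially free once one observes that diagonalizability — the only structural fact used — is insensitive to the ground field. I would then assemble these observations into the two implications and conclude.
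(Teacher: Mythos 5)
Your proof is correct, but it takes a genuinely different route from the paper's. The paper argues the forward direction by first passing through $\mathbb{R}$: cospectrality of symmetric matrices gives an orthogonal similarity $M(G)=U^{T}M(H)U$, hence equivalence of $xI-M(G)$ and $xI-M(H)$ over $\mathbb{R}[x]$ and codeterminantality over $\mathbb{R}[x]$ via Proposition~\ref{prop:equiimpliescodeterminantal}, and then descends the equality of ideals from $\mathbb{R}[x]$ to $\mathbb{Q}[x]$ by observing that the generators have integer coefficients, so the $\mathbb{R}$-linear relations among them can be realized with rational coefficients. You instead stay in $\mathbb{Q}[x]$ throughout: using that $\mathbb{Q}[x]$ is a PID, codeterminantality is the same as equality of the gcd's $\Delta_k=d_1\cdots d_k$, i.e.\ of the Smith normal form of $xI-M$ over $\mathbb{Q}[x]$, and you then show that for a diagonalizable matrix the invariant factors of $xI-M$ are completely determined by the characteristic polynomial (the last invariant factor is the radical of $\chi_M$ and each irreducible $p_j$ with multiplicity $e_j$ must occupy the last $e_j$ slots of the divisibility chain). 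Your argument buys a cleaner and arguably more rigorous treatment of the passage to $\mathbb{Q}$ --- the paper's descent step is stated rather loosely --- and it isolates diagonalizability as the precise structural hypothesis, so the statement is seen to hold over any field containing the entries of $M$; the paper's argument buys brevity by reusing the $\mathbb{R}[x]$ result of \cite{Codeterminantal} essentially verbatim. The reverse implication is the same in both: the $n$-th determinantal ideal is generated by the characteristic polynomial.
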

\begin{proof}
Let $G$ and $H$ be two $M$-cospectral graphs. Since $M$ is symmetric, then there exists an orthogonal matrix $U$ such that $M(G) = U^T M(H)U$. In particular, $xI-M(G)$ and $xI-M(H)$ are equivalent over $\mathbb{R}$, 
then by Proposition~\ref{prop:equiimpliescodeterminantal} we have that $xI-M(G)$ and $xI-M(H)$ are codeterminantal over $\mathbb{R}[x]$. Furthermore, since all the coefficients of the generating polynomials of any determinantal ideal of $xI-M$ are integers, then any relation defined by the generators of a determinantal ideal of $xI-M$ over $\mathbb{R}$ can be characterized by a set of rational numbers. Hence, we can conclude that a set of polynomials defining a base for $I_k^{\mathbb{R}}(xI-M)$ is also a base for $I_k^{\mathbb{Q}}(xI-M)$, and $G$ and $H$ are $M_x^{\mathbb{Q}}$-codeterminantal. The other direction follows by Definition~\ref{defdetideals}, since the characteristic ideal is encoded in the $n$-th determinantal ideal of $xI-M$.
\end{proof}

\begin{remark}
Note that Theorem~\ref{cospectral-codeterminantal} holds for every graph matrix defined in $\mathcal{M}$. On the other hand, for graph matrices with (non-integer) real entries (like the normalized adjacency matrix from \cite{chung}), 
the analogous holds replacing $\mathbb{Q}$ with $\mathbb{R}$.
\end{remark}

Next, we state a last lemma that we will use later on.

\begin{lemma}{\cite{Graham}}\label{lemma:jn}
    Let $M\in \mathbb{M}_n (\mathcal{R})$, and $y$ an indeterminate that commutes with $\mathcal{R}$. Then $$\det (M+yJ) =\det(M)+y\cof(M)$$
    where $\cof(M)=\sum_{i,j\in [n]} (-1)^{i+j} \det(M)_{i,j}$ and $(M)_{i,j}$ is the submatrix of size $(n-1)\times (n-1)$ obtained from $M$ by deleting the $i$-th row and the $j$-th column. 
\end{lemma}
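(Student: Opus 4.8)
The plan is to exploit the fact that $J=\mathbf{1}\mathbf{1}\trans$ has rank one, so that passing from $M$ to $M+yJ$ amounts to adding the same column vector $y\mathbf{1}$ to each of the $n$ columns of $M$, and then to invoke multilinearity of the determinant in the columns. Writing $c_1,\dots,c_n$ for the columns of $M$, the $j$-th column of $M+yJ$ is $c_j+y\mathbf{1}$, and expanding $\det(M+yJ)$ column by column by multilinearity produces $\sum_{S\subseteq[n]} y^{|S|}\det\big(M^{(S)}\big)$, where $M^{(S)}$ is the matrix whose columns indexed by $S$ have all been replaced by $\mathbf{1}$ and whose remaining columns agree with those of $M$. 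Since the determinant is a polynomial function of the entries, this identity is valid over any commutative ring, in particular over $\mathcal{R}$ with the central indeterminate $y$ adjoined.

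Next I would observe that whenever $|S|\ge 2$ the matrix $M^{(S)}$ has two identical columns (both equal to $\mathbf{1}$), hence $\det\big(M^{(S)}\big)=0$ — again a specialization of a polynomial identity, so valid over $\mathcal{R}$. Therefore only the terms with $|S|=0$ and $|S|=1$ survive: the empty set contributes $\det(M)$, and for each $j\in[n]$ the singleton $S=\{j\}$ contributes $y\,\det\big(M^{(j)}\big)$, where $M^{(j)}$ is obtained from $M$ by replacing its $j$-th column with $\mathbf{1}$. I would then evaluate $\det\big(M^{(j)}\big)$ by Laplace expansion along the $j$-th column: since that column is all ones and deleting it recovers the corresponding minor of $M$, one gets $\det\big(M^{(j)}\big)=\sum_{i\in[n]}(-1)^{i+j}\det(M)_{i,j}$. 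Summing over $j$ yields $\sum_{j\in[n]}\det\big(M^{(j)}\big)=\sum_{i,j\in[n]}(-1)^{i+j}\det(M)_{i,j}=\cof(M)$, and combining the surviving terms gives $\det(M+yJ)=\det(M)+y\,\cof(M)$.

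An equivalent route is the adjugate form of the matrix determinant lemma, $\det(M+y\mathbf{1}\mathbf{1}\trans)=\det(M)+y\,\mathbf{1}\trans\operatorname{adj}(M)\,\mathbf{1}$, followed by the identification $\mathbf{1}\trans\operatorname{adj}(M)\,\mathbf{1}=\sum_{i,j}(-1)^{i+j}\det(M)_{i,j}=\cof(M)$. I do not anticipate a genuine obstacle here: the only point requiring care is to use the ring-theoretic versions of these statements (multilinearity and the adjugate identity, both valid as polynomial identities over an arbitrary commutative ring) rather than the forms that presuppose $M$ invertible, so that the conclusion holds for $M\in\mathbb{M}_n(\mathcal{R})$ and the commuting indeterminate $y$ as stated.
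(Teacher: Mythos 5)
Your proof is correct. Note that the paper does not actually prove this lemma---it is quoted from Graham--Hoffman--Hosoya \cite{Graham} without proof---so there is no in-paper argument to compare against; your multilinearity expansion (only the terms with at most one column replaced by $\mathbf{1}$ survive, and the singleton terms Laplace-expand to $\sum_i(-1)^{i+j}\det(M)_{i,j}$) is the standard derivation and is complete, including the correct care about working with polynomial identities over a commutative ring rather than assuming $M$ invertible.
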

 
We are now ready to state the main result of this section.

\begin{theorem}\label{GC-GxyC}
Let $M\in \mathcal{M}=\{A,L,Q,D,D^L,D^Q,D^{\deg},D^{\deg}_+,A^{\trs},A^{\trs}_+ \}$, and let us denote 
$M_{x,y}= xI+yJ-M$. 
Let $G$ and $H$ be two graphs (connected if $M$ is not the adjacency or the (signless) Laplacian). Then the following statements are equivalent:
\begin{itemize}
    \item[(i)] $G$ and $H$ are $(yJ-M)$-cospectral for two different values of $y$. 
    \item[(ii)] $G$ and $H$ are $(M,\mathbb{R})$-cospectral.
    \item[(iii)] $G$ and $H$ are $M_{x,y}^{\mathbb{Q}}$-codeterminantal.
    \item[(iv)] $G$ and $H$ are $(yJ-M)$-cospectral for an irrational number $y$.
\end{itemize}
Moreover, if $M\in\{A,L,Q\}$ or $G$, $\overline{G}$, $H$ and $\overline{H}$ have diameter 2, then the following is also an equivalent statement:
\begin{itemize}
    \item[(v)] $G$ and $H$ are generalized $M$-cospectral. 
\end{itemize}
\end{theorem}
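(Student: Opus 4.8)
Throughout, recall that (ii) means $G$ and $H$ are $(yJ-M)$-cospectral for every $y\in\mathbb{R}$. The plan is to prove the cycle (iii) $\Rightarrow$ (i) $\Rightarrow$ (ii) $\Rightarrow$ (iii) together with the easy facts (ii) $\Rightarrow$ (i) and (ii) $\Leftrightarrow$ (iv), and then to obtain (v) under the extra hypotheses. The workhorse for the equivalences among (i), (ii), (iv) is Lemma~\ref{lemma:jn}: treating $y$ as an indeterminate, the characteristic polynomial in a variable $t$ of $yJ-M(G)$ is $\det(tI-yJ+M(G))=\tilde p_G(t)-y\,\tilde c_G(t)$, where $\tilde p_G(t)=\det(tI+M(G))$ and $\tilde c_G(t)=\cof(tI+M(G))$ have \emph{integer} coefficients because every matrix in $\mathcal{M}$ has integer entries. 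Hence $G$ and $H$ are $(yJ-M)$-cospectral at a value $y_0$ exactly when $\tilde p_G-\tilde p_H=y_0(\tilde c_G-\tilde c_H)$ in $\mathbb{R}[t]$: if this holds at two distinct values then $\tilde c_G=\tilde c_H$ and $\tilde p_G=\tilde p_H$, hence it holds for all $y_0$, which is (ii), so (i) $\Rightarrow$ (ii); and if it holds at a single irrational $y_0$ then, a rational equal to $y_0$ times a rational being forced to vanish, again $\tilde p_G=\tilde p_H$ and $\tilde c_G=\tilde c_H$, so (iv) $\Rightarrow$ (ii). The converses (ii) $\Rightarrow$ (i) and (ii) $\Rightarrow$ (iv) are immediate. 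For (iii) $\Rightarrow$ (i) I would use that the $n$-th determinantal ideal of $M_{x,y}$ is the principal ideal generated by $\det M_{x,y}=\det(xI+yJ-M)$, a polynomial monic of degree $n$ in $x$; two principal ideals of $\mathbb{Q}[x,y]$ with such generators coincide iff the generators do, so (iii) forces $\det(xI+yJ-M(G))=\det(xI+yJ-M(H))$ in $\mathbb{Q}[x,y]$, and specializing $y$ to two distinct values gives (i).

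The substantive step is (ii) $\Rightarrow$ (iii). By the identity above, (ii) is equivalent to $\tilde p_G=\tilde p_H$ and $\tilde c_G=\tilde c_H$; the first says $M(G)$ and $M(H)$ are cospectral, and dividing the second by the first gives the rational function $\mathbf{1}^{\top}(tI+M)^{-1}\mathbf{1}=\sum_{\lambda}\|P_\lambda\mathbf{1}\|^{2}/(t+\lambda)$, whose residues are the squared norms of the projections $P_\lambda\mathbf{1}$ of $\mathbf{1}$ onto the eigenspaces, so these norms agree for $M(G)$ and $M(H)$. Patching orthogonal maps $U_\lambda\colon E_\lambda(M(G))\to E_\lambda(M(H))$ that send one projection of $\mathbf{1}$ to the other---possible precisely because the dimensions and these norms match---as in the proof of Theorem~\ref{original-adjacency} (cf.\ \cite{vDHK,JohnsonN}), one gets an orthogonal matrix $U$ with $U M(G)U^{\top}=M(H)$ and $U\mathbf{1}=\mathbf{1}$, hence $UJU^{\top}=J$. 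Then $M_{x,y}(G)=U^{\top}M_{x,y}(H)\,U$, so $M_{x,y}(G)$ and $M_{x,y}(H)$ are equivalent over $\mathbb{R}[x,y]$ and therefore codeterminantal over $\mathbb{R}[x,y]$ by Proposition~\ref{prop:equiimpliescodeterminantal}; and since all entries of $M_{x,y}$ lie in $\mathbb{Z}[x,y]$, the equality of determinantal ideals descends from $\mathbb{R}[x,y]$ to $\mathbb{Q}[x,y]$ by the rationality argument already used in the proof of Theorem~\ref{cospectral-codeterminantal}. This yields (iii) and closes the cycle. The construction of $U$, together with this descent, is the step I expect to require the most care.

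For (v), the key point is that under the stated hypotheses complementation acts as an affine involution on the matrix. One has $A(\overline{G})=J-I-A(G)$, $L(\overline{G})=nI-J-L(G)$ and $Q(\overline{G})=(n-2)I+J-Q(G)$ for all graphs, and when $G,\overline{G},H,\overline{H}$ all have diameter $2$ the relations $A(\overline{G})=J-I-A(G)$, $D(G)=2(J-I)-A(G)$ and $\trs(G)=2(n-1)\mathbf{1}-\deg(G)$ (whose consequences for $D^L$ and $D^Q$ already appear in Lemmas~\ref{lemma:DLL} and~\ref{lemma:DQQ}) give, by a direct computation done matrix by matrix, a relation $M(\overline{G})=c_1I+c_2J-M(G)$ with $c_1,c_2\in\mathbb{Z}$ depending only on $n$ and $c_2\neq 0$ (in fact $c_2\in\{\pm1,\pm3\}$), and similarly for $H$. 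Given such a relation, $G$ and $H$ are generalized $M$-cospectral iff they are $M$-cospectral and $c_1I+c_2J-M(G)$, $c_1I+c_2J-M(H)$ are cospectral, i.e.\ (after translating by $c_1I$) iff they are $(yJ-M)$-cospectral for $y=0$ and for $y=c_2$---two distinct values, which is (i); conversely (ii) supplies these two specializations and hence the $M$-cospectrality of $G,H$ and of their complements, i.e.\ (v). Together with the equivalence of (i)--(iv), this establishes (v). The diameter-$2$ computations needed here are routine but must be carried out for each of the ten matrices in $\mathcal{M}$.
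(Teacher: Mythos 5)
Your proposal is correct and follows essentially the same route as the paper: the Graham--Hoffman--Hosoya identity (Lemma~\ref{lemma:jn}) with integrality of the coefficients for (i)$\Leftrightarrow$(ii)$\Leftrightarrow$(iv), the Johnson--Newman construction of an orthogonal $U$ fixing $J$ followed by the rational-descent argument of Theorem~\ref{cospectral-codeterminantal} for (ii)$\Rightarrow$(iii), evaluation of the top determinantal ideal for (iii)$\Rightarrow$(i), and the affine relation $M(\overline{G})=c_1I+c_2J-M(G)$ (checked matrix by matrix under the diameter-2 hypothesis) to reduce (v) to cospectrality at the two values $y=0$ and $y=c_2$. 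You merely spell out a few steps the paper delegates to \cite{JohnsonN,vDHK}; no substantive difference.
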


\begin{proof} \leavevmode
\begin{description}
\item[$(i)\iff (ii)$] Clearly (ii) implies (i). Conversely, if $y_1\neq y_2$ and $G$ and $H$ are $y_i-M$-cospectral for $i=1,2$, then $$\det (xI+M(G)-y_iJ) = \det (xI+M(H)-y_iJ)\text{ for }i=1,2.$$ Thus, By Lemma~\ref{lemma:jn},
$$\det (xI+M(G)) -y_i \cof (xI+M(G)) = \det (xI+M(H)) -y_i\cof(xI+M(H))$$
for $i=1,2.$
Since $y_1\neq y_2$, $\cof(xI+M(G))=\cof(xI+M(H))$ and also the characteristic polynomials of $G$ and $H$ are equal. Hence, $G$ and $H$ are $(yJ-M)$-cospectral for any value of $y$.
\item[$(ii) \Rightarrow (iii)$] Following an analogous argument as in \cite[Theorem 2]{JohnsonN} but using $M$, it is shown that there is an orthogonal matrix $U$ such that $U^T JU=J$ and $M(G) = U^T M(H) U$. 
We have that $yJ-M(G)$ is orthogonally similar to $yJ-M(H)$ via $U$ for every $y\in\mathbb{R}$. Then, by Theorem \ref{cospectral-codeterminantal}, $G$ and $H$ are $M_{x,y}^{\mathbb{Q}}$-codeterminantal.
\item[$(iii) \Rightarrow (i)$] It is straightforward by evaluation on $(x,y)$ and the fact that the corresponding characteristic polynomials are encoded in the $|G|$-th determinantal ideals.
\item[$(iv)\Rightarrow (i)$] An analogous argument to the one presented in the proof of \cite[Theorem 2]{vDHK} shows that if $G$ and $H$ are $(yJ-M)$-cospectral for only one value of $y$ then $y$ is rational. On the other hand, it is clear that $(ii)$ implies $(iv)$. Thus we have the equivalence for the first four statements.
\end{description}
Now, to conclude this proof, we will prove that $(v)$ implies $(i)$ and $(ii)$ implies $(v)$. In order to do so, we only need to show that, under the given assumptions, $M(\overline{G})=M_{x,y}$ for some $x,y\in\mathbb{R}$ such that $(x,y)\neq (0,0)$. First, let us notice that for any graph
\begin{align*}
    &A(\overline{G})=(J-I)-A(G)=A(G)_{x=-1,y=1},\\
    &L(\overline{G})=(nI-J)-L(G)=L(G)_{x=n, y=-1},\text{ and}\\
    &Q(\overline{G})=(n-2)I+J-Q(G)=Q(G)_{x=n-2, y=1}.
\end{align*}
Furthermore, for graphs with diameter $2$ such that their complements also have diameter $2$, we have that $D(G) = 2(J-I) -A(G)$ and $D(\overline{G})= (J-I)+A(G)$. Therefore $D(\overline{G})=(-3)I+3J-D(G)=D(G)_{x=-3,\, y=3}$. Moreover $\diag (\trs(G)) + \diag(\trs(\overline{G})) = (3n-3) I$. Thus,
\begin{align*}
D^L(\overline{G}) &=D^L(G)_{x=3n,\,y=-3},\\
D^Q (\overline{G} ) &= D^Q (G) _{x=3n-6,\, y=3},\\
A^{\trs} (\overline{G})&=A^{\trs}(G)_{x=3n-2,\, y= -1},\\ 
D^{\deg} (\overline{G})&=D^{\deg} (G) _{x=n+2,\, y = -3}, \\
A^{\trs}_+ (\overline{G})&=A^{\trs}_+(G)_{x=3n-4,\, y= 1},\\ 
D^{\deg} (\overline{G})&=D^{\deg}_+ (G) _{x=n-4,\, y = 3}.
\qedhere\end{align*}
\end{proof}

Note that the extra condition $(v)$ in Theorem~\ref{GC-GxyC} can be restated for graphs such that for any two vertices $u$ and $v$ in $G$ we have that $d_G (u,v) + d_{\overline{G}} (u,v) = k$ for some positive integer $k$. However, this condition implies that $G$ and its complement have diameter at most 2. Thus, $k=3$ and condition $(v)$ are equivalent.

Finally, note that for pairs of graphs with diameter 2 and complements of diameter 2, the above theorem is reduced to the following corollary.

 \begin{corollary}\label{coro:complementdiam2ALQ}
Let $G$ and $H$ be two graphs such that $G$, $H$, $\overline{G}$, and $\overline{H}$ have diameter 2. Then the following statements hold:
\begin{enumerate}
    \item $G$ and $H$ are generalized $A$-cospectral if and only they are generalized $D$-cospectral,
    \item $G$ and $H$ are generalized $L$-cospectral if and only if they are generalized $M$-cospectral for any $M\in \{D^L, D^{\deg}_+, A^{\trs}_+ \}$, and
    \item $G$ and $H$ are generalized $Q$-cospectral if and only if they are generalized $M$-cospectral for any $M\in \{D^Q, D^{\deg}, A^{\trs} \}$.
\end{enumerate}
\end{corollary}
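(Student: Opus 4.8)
The plan is to reduce everything to Theorem~\ref{GC-GxyC} together with a handful of linear identities valid for diameter-$2$ graphs. First I would record that if a graph $G$ has diameter at most $2$ then $A(G)+D(G)=2(J-I)$ and $\trs(G)+\deg(G)=2(n-1)\mathbf{1}$, which (as in the computations in the proof of Theorem~\ref{GC-GxyC}, and cf.\ Lemmas~\ref{lemma:DLL} and~\ref{lemma:DQQ}) give
\begin{align*}
D(G) &= -2I+2J-A(G), & D^L(G) &= 2nI-2J-L(G),\\
A^{\trs}_+(G) &= 2(n-1)I-L(G), & D^{\deg}_+(G) &= -2I+2J+L(G),\\
D^Q(G) &= (2n-4)I+2J-Q(G), & A^{\trs}(G) &= 2(n-1)I-Q(G),
\end{align*}
together with $D^{\deg}(G)=2I-2J+Q(G)$. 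The crucial feature is that in each identity the coefficients of $I$, $J$ and of the ``base'' matrix ($A$, $L$, or $Q$) depend only on $n$, so they are the same for $G$ and for $H$, and the coefficient on the base matrix is nonzero.

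Next I would observe that the property of being $(M,\mathbb{R})$-cospectral is unchanged under such a substitution. Indeed, if $M(G)=aI+bJ+cN(G)$ and $M(H)=aI+bJ+cN(H)$ with $a,b,c$ fixed and $c\neq 0$, then $xI+yJ+zM(G)=(x+za)I+(y+zb)J+(zc)N(G)$, and the affine map $(x,y,z)\mapsto(x+za,\,y+zb,\,zc)$ is a bijection of $\mathbb{R}^3$ carrying $\{z\neq 0\}$ onto $\{z\neq 0\}$; consequently the families $\{xI+yJ+zM(G):z\neq 0\}$ and $\{xI+yJ+zN(G):z\neq 0\}$ coincide, and likewise for $H$. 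Therefore $G$ and $H$ are $(M,\mathbb{R})$-cospectral if and only if they are $(N,\mathbb{R})$-cospectral. Applying this with $(N,M)=(A,D)$, with $N=L$ and $M\in\{D^L,A^{\trs}_+,D^{\deg}_+\}$, and with $N=Q$ and $M\in\{D^Q,A^{\trs},D^{\deg}\}$ yields the desired equivalences among the three ``$(\cdot,\mathbb{R})$-cospectral'' relations.

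Finally I would invoke Theorem~\ref{GC-GxyC}: since $G$, $\overline{G}$, $H$, $\overline{H}$ all have diameter $2$, for each $M\in\mathcal{M}$ statement (v) (``$G$ and $H$ generalized $M$-cospectral'') is equivalent to statement (ii) (``$G$ and $H$ are $(M,\mathbb{R})$-cospectral''). Chaining this with the equivalences of the previous paragraph gives the three claimed biconditionals. I do not anticipate a genuine obstacle; the points needing care are the bookkeeping in the linear identities, the verification that the base-matrix coefficient is never zero (which is exactly what legitimises the change of variables, and is why the intermediate presence of $J$ is harmless), and remembering that all four diameter-$2$ hypotheses are used --- precisely so that statement (v) is available through Theorem~\ref{GC-GxyC} for $G$, $H$ and their complements simultaneously.
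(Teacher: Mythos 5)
Your proof is correct and takes essentially the same route as the paper: both rest on the diameter-$2$ identities writing each matrix as $aI+bJ+cN$ with $N\in\{A,L,Q\}$ and $c\neq 0$, and on the equivalence of statements (ii) and (v) in Theorem~\ref{GC-GxyC}. The only difference is cosmetic --- you treat all cases uniformly by reparametrizing the pencil $xI+yJ+zM$, whereas the paper instead invokes the spectral-determination Lemmas~\ref{lemma:r1}, \ref{lemma:DLL} and~\ref{lemma:DQQ}; your identities and the nonvanishing of each base-matrix coefficient all check out.
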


\begin{proof}
Under these hypotheses it is easy to see that a pair of graphs is generalized $A$-cospectral if and only if they are generalized $D$-cospectral. Moreover, recall that for diameter 2, by Lemmas~\ref{lemma:r1} and Lemma~\ref{lemma:DLL}, we have that the spectrum of any matrix in $\{L,A^{\trs}_+, D^L, D^{\deg}_+ \}$ can be obtained from the spectrum of any other such matrix.
Furthermore the spectrum of $Q$ and $A^{\trs}$ determine each other, as well as for $D^{\deg}$ and $D^Q$. Lastly, by Lemma~\ref{lemma:DQQ}, the spectrum of $D^{\deg}$ ($D^Q$) is determined by (and determines) the spectrum of $2J-Q$. Hence, the result follows by Theorem~\ref{GC-GxyC}.
\end{proof}

\section{A new application of the Smith normal form to combinatorics}\label{sec:SNF}

In this section, we find new combinatorial applications of the SNF, in particular regarding the graph characterization problem.
With the help of algebraic tools (determinantal ideals), we will obtain the SNF of several graph classes (trees) for some associated matrices. Moreover, we will show a new application of the SNF for graph characterizations; in particular, we will show that star graphs are determined by the SNF of its $A^{\trs}$ matrix and by the SNF of its $A^{\trs}_+$ matrix.

\subsection{Previous results}\label{sec:previousSNF}

The Smith normal form (SNF) has been studied for several graph classes and different associated matrices. For the Laplacian matrix of a graph $G$, the subject of the SNF relates to the \textit{sandpile group}, the torsion part of its cokernel ($\mathbb{Z}^{|V|}/ \text{Im} (L(G))$), denoted by $K(G)$, which in turn is an algebraic invariant for the chip-firing game, therefore the literature is vast, we refer the reader to \cite{KlivansBook}. Similarly, the cokernel of the adjacency matrix, which is known as the \textit{Smith group}, and the SNF of adjacency matrices, have been computed for several graphs in the literature.  For the distance matrix, the SNF has been determined for cycles, complements of cycles, trees, unicyclic graphs, wheels, and wheel graphs with trees attached to their vertices and partially determined for complete multipartite graphs, see \cite{BK} and \cite{HW2008}. In particular, star graphs are known to be determined by their SNF for the distance Laplacian matrix. Moreover, complete graphs are also determined by the SNF's of their (signless) distance Laplacian, (signless) adjacency-transmission, and (signless) degree-distance matrices, see \cite{Codeterminantal, enumeration} and \cite{degreedistance}.

The SNF has also been shown to be particularly useful to distinguish graphs. The SNF of the complete graph is known for the adjacency (distance), and the (signless) Laplacian matrix, see \cite{enumeration} (which in this case, and since $A(K_n)=D(K_n)$, it also corresponds to the (signless) distance Laplacian,  the (signless) transmission-adjacency matrix, and the (signless) degree-distance matrix). Furthermore, it is known that complete graphs are determined by the SNF of any matrix in $\{L,Q,D^L,D^Q,A^{\trs}, A^{\trs}_+,D^{\deg},D^{\deg}_+\}$ \cite{2trivialAlfVal,degreedistance}. Note that complete graphs are not determined by the SNF of its adjacency (distance) matrix, since the SNF of $A(K_5)=D(K_5)$ is equal to the SNF of the adjacency matrix of the butterfly graph, $K_1\vee 2K_2$, and the SNF of the distance matrix of $K_5-e$. Namely, $\SNF(A(K_5))=\diag (1,1,1,1,4) = \SNF (A(K_1\vee 2K_2))=\SNF (D(K_5-e))$.

\subsection{A new graph characterization using the Smith normal form}\label{sec:newSNF}

Let $S_{n+1}:=K_{n,1}$ denote the graph class of stars. In \cite{degreedistance}, the SNF of star graphs for $D^{\deg}$ and $D^{\deg}_+$ is studied. Moreover, in \cite{Codeterminantal} it was shown that star graphs are determined by the SNF of $D^L$. Here, we will establish the SNF of star graphs for $A^{\trs}$ and $A^{\trs}_+$. 

\begin{proposition}\label{SNFstar}
    Both the SNF of $A^{\trs}(S_{n+1})$ and the SNF of $A^{\trs}_+(S_{n+1})$ are equal to
    $$\diag (1,1,2n-1,\ldots ,2n-1,2n(n-1)(2n-1)).$$
\end{proposition}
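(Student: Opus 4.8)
For the star $S_{n+1}=K_{1,n}$, I would first write down both matrices explicitly. The center has degree $n$ and transmission $n$; each of the $n$ leaves has degree $1$ and transmission $1+2(n-1)=2n-1$. So $\trs(S_{n+1})$ has one entry equal to $n$ and $n$ entries equal to $2n-1$, and $\diag(\trs(S_{n+1})) = \operatorname{diag}(n, 2n-1,\ldots,2n-1)$. Hence
\[
A^{\trs}_\pm(S_{n+1}) = \begin{pmatrix} n & \pm\mathbf{1}^\top \\ \pm\mathbf{1} & (2n-1)I_n \end{pmatrix},
\]
where the sign is $-$ for $A^{\trs}$ and $+$ for $A^{\trs}_+$. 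Since conjugating by $\operatorname{diag}(1,-I_n)$ (a unimodular integer matrix) carries the $A^{\trs}$ case to the $A^{\trs}_+$ case, the two SNFs coincide, and it suffices to treat $A^{\trs}_+$.

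**Computing the invariant factors.** The plan is to compute the determinantal ideals $I_k$ (which over $\mathbb{Z}$ are principal, generated by $\gcd$ of $k\times k$ minors) and then use $d_k = I_k/I_{k-1}$. For $k=1$: the matrix has an off-diagonal entry $\pm1$, so $I_1 = (1)$ and $d_1 = 1$. For $k=2$: one can find a $2\times2$ submatrix with determinant $\pm1$ (e.g. rows/columns indexed by the center and one leaf give $\det\begin{pmatrix} n & 1\\ 1 & 2n-1\end{pmatrix}$... not unit, but a submatrix using two leaf rows and the center-column plus a leaf-column, or a mixed $2\times2$ block containing two $\pm1$ entries off the diagonal, gives determinant $\pm1$), so $d_2 = 1$. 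For $3 \le k \le n$: every $k\times k$ minor is divisible by $2n-1$ because such a submatrix, after the first two rows are stripped, still contains at least $k-2\ge 1$ rows each of which is $2n-1$ times a standard basis vector restricted to the leaf block; more carefully, a cofactor expansion shows each such minor is a $\mathbb{Z}$-combination of $(2n-1)^{k-2}$ and $(2n-1)^{k-1}$, with the coefficient of the lower power achievable as a unit, so $I_k = ((2n-1)^{k-2})$ and $d_k = 2n-1$ for $3\le k\le n$. Finally $d_{n+1} = \det(A^{\trs}_+)/I_n$; computing the determinant via the Schur complement, $\det = (2n-1)^n\bigl(n - \tfrac{n}{2n-1}\bigr) = (2n-1)^{n-1} n(2n-2) = (2n-1)^{n-1}\cdot 2n(n-1)$, so $d_{n+1} = 2n(n-1)(2n-1)$. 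Collecting: $\operatorname{SNF} = \operatorname{diag}(1,1,2n-1,\ldots,2n-1,2n(n-1)(2n-1))$ with $n-2$ copies of $2n-1$, matching the claim.

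**The main obstacle.** The delicate step is the uniform claim that $I_k = ((2n-1)^{k-2})$ for all $3\le k\le n$ — i.e. that the gcd of the $k\times k$ minors is exactly this power, neither more nor less. The upper bound ("divisible by") I expect to get cleanly from the row structure (most rows live in the scalar block $(2n-1)I_n$). The lower bound (exhibiting a minor whose $2n-1$-adic valuation is exactly $k-2$) needs an explicit choice of a $k\times k$ submatrix: take the two center-adjacent rows/columns together with $k-2$ leaf rows and a cleverly chosen set of $k-2$ leaf columns so that cofactor expansion isolates a single term of the form $\pm(2n-1)^{k-2}$ times a unit. I would verify this with a concrete small block (say using the rows for the center and one leaf, and columns for two leaves, embedded appropriately) and then induct. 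An alternative, possibly cleaner, route is to directly row/column reduce $A^{\trs}_+$ over $\mathbb{Z}$: use the $\pm1$ in the first row to clear the center row and column, which turns the center pivot into $\pm1$ and modifies the leaf block into $(2n-1)I_n$ minus a rank-one integer correction; one then clears one more leaf using a resulting entry, and the remaining $(n-1)\times(n-1)$ block is $(2n-1)I_{n-1}$ plus a rank-one term whose SNF is easy. I would present whichever of the two is shorter after checking the $n=3,4$ cases by hand.
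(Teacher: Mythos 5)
Your computation is correct and complete in all essentials; the one step you flag as delicate does go through: a $k\times k$ minor equal to $\pm(2n-1)^{k-2}$ is obtained by taking rows indexed by the centre and leaves $1,\dots,k-1$ and columns indexed by the centre and leaves $1,\dots,k-2,k$, then expanding along the last column (whose only nonzero entry is the $\pm1$ in the centre row) and along the last row of the resulting cofactor, which isolates $\pm\det\big((2n-1)I_{k-2}\big)$. However, your route is genuinely different from the paper's. The paper does not prove Proposition~\ref{SNFstar} directly: it first proves Theorem~\ref{SNFtrs-trees}, a formula for the SNF of $A^{\trs}(T)$ for an \emph{arbitrary} tree $T$, by evaluating the critical ideals of trees from \cite{citrees} at $X=\trs(T)$ and applying the elementary divisors theorem (Lemma~\ref{lemma:elementarydivisors}); the signless case is handled there by an induction on forests showing that the relevant minors of $\diag(X)-A$ and $\diag(X)+A$ agree up to sign. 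The star proposition is then read off as a specialization: the minimal $2$-matching description yields $\Delta_k=(2n-1)^{k-2}$ for $3\le k\le n$, and the weighted spanning-tree formula with $c(u)=\trs(u)-\deg(u)$ gives the determinant $2n(n-1)(2n-1)^{n-1}$. Your direct attack on the explicit bordered matrix with centre entry $n$ and leaf block $(2n-1)I_n$ is more elementary and self-contained, and your sign-conjugation by $\diag(1,-I_n)$ is a cleaner way to pass between $A^{\trs}$ and $A^{\trs}_+$ than the paper's forest induction (it works here because every edge of the star joins the centre to a leaf). What the paper's detour buys is the general tree formula, which it then uses to discuss whether trees are determined by these SNFs; your argument yields only the star case.
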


We will skip the proof for the moment to focus on trees in general. It is well known that almost all trees are cospectral (\cite{Schwenk}). Regarding the SNF, all trees are $L$-coinvariant, $Q$-coinvariant, and $D$-coinvariant (\cite{HW2008}). On the other hand, it is also conjectured that almost all
trees are determined by the SNF of their $D^L$, by the SNF of their $D^Q$ (see \cite{enumeration, ah}), by the SNF of their $A^{\trs}_+$, and by the SNF of their $A^{\trs}$ (\cite{degreedistance}).

We recall the following lemma that we will need later on and which provides a useful way of computing the Smith normal form of a matrix.   

\begin{lemma}[Elementary divisors theorem, \cite{Jacobson}]\label{lemma:elementarydivisors}
    Let $M$ be an $n\times m$ matrix with entries in a principal ideal domain (PID). Then the $k$-th invariant factor of $M$ is equal to $\Delta_k / \Delta_{k-1}$, where $\Delta_k$ is the greatest common divisors of the minors of size $k$ of $M$, and $\Delta_0=1$.
\end{lemma}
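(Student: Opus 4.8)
The plan is to take as given the existence of the Smith normal form over a PID $R$ (a standard fact, proved by reduction to diagonal form through unimodular row and column operations), so that there are unimodular matrices $U,V$ with $UMV = D := \diag(d_1,\ldots,d_r,0,\ldots,0)$ and $d_1 \mid d_2 \mid \cdots \mid d_r$; these $d_k$ are by definition the invariant factors. Writing $\Delta_k(M)$ for the gcd of all $k\times k$ minors of $M$ (well defined up to a unit, since a PID is a UFD and hence a gcd domain), the whole proof reduces to two claims: (a) $\Delta_k$ is invariant, up to units, under multiplication by unimodular matrices, and (b) $\Delta_k(D) = d_1 d_2\cdots d_k$. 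Granting these, $\Delta_k(M) = \Delta_k(D) = d_1\cdots d_k$, whence $\Delta_k/\Delta_{k-1} = d_k$ (with $\Delta_0 = 1$ the empty product), which is exactly the assertion.

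For claim (a), the key tool is the Cauchy--Binet formula. If $M' = UM$ and $I,J$ are index sets of size $k$, then
\[ \det\big((M')_{I,J}\big) = \sum_{|K|=k} \det(U_{I,K})\,\det(M_{K,J}), \]
so every $k\times k$ minor of $UM$ is an $R$-linear combination of $k\times k$ minors of $M$; hence $\Delta_k(M)$ divides each minor of $UM$ and therefore $\Delta_k(M)\mid\Delta_k(UM)$. Applying the same argument to $M = U^{-1}(UM)$ gives the reverse divisibility, so $\Delta_k(UM)$ and $\Delta_k(M)$ are associates. The identical argument on the right (using minors of $MV$) gives that $\Delta_k(MV)$ and $\Delta_k(M)$ are associates, and combining the two yields that $\Delta_k(UMV)$ and $\Delta_k(M)$ are associates.

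For claim (b), I would compute the minors of the diagonal matrix $D$ directly: a $k\times k$ submatrix $D_{I,J}$ has nonzero determinant only when $I=J$, in which case $\det(D_{I,I}) = \prod_{i\in I} d_i$. Thus
\[ \Delta_k(D) = \gcd\{\, d_{i_1}\cdots d_{i_k} : i_1<\cdots<i_k \,\}. \]
The divisibility chain $d_1\mid d_2\mid\cdots$ forces $d_j \mid d_{i_j}$ whenever $i_1<\cdots<i_k$ (since then $i_j\ge j$), so $d_1\cdots d_k$ divides every product appearing in the gcd while itself being one of them; hence $\Delta_k(D) = d_1\cdots d_k$ up to a unit. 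Taking the ratio $\Delta_k/\Delta_{k-1}$ then collapses the telescoping product to $d_k$.

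I expect the main obstacle to be the careful bookkeeping in claim (a): one must verify Cauchy--Binet over the (possibly non-field) ring $R$ and track that all identities hold only up to units, so that ``gcd'' and ``invariant factor'' are consistently interpreted as equivalence classes under associates. A secondary subtlety is that the formula presupposes SNF existence and implicitly reproves its uniqueness, since the $d_k$ are pinned down by $M$ alone; I would therefore state existence explicitly as the standard input and let claim (b) supply the uniqueness for free.
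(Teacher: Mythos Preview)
Your proof is correct and is the standard textbook argument (Cauchy--Binet for invariance of $\Delta_k$ under unimodular multiplication, followed by a direct computation on the diagonal form). Note, however, that the paper does not supply its own proof of this lemma: it is stated as a citation from Jacobson's \emph{Basic Algebra I} and used as a black box, so there is no in-paper argument to compare against.
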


Given a graph $G$, the $k$-th \textit{critical ideals}, that is, the $k$-th determinantal ideals of $L(G,X)=\diag (x_1,x_2,\ldots ,x_n)-A(G)$ have been extensively studied. In particular, they have been described for trees, see \cite{citrees}. Let us recall that a pair of graphs are $M$-coinvariant if and only if they are $M$-codeterminantal over $\mathbb{Z}$ (\cite[Theorem 35]{Codeterminantal}). In particular, by \cite[Proposition 14]{Codeterminantal}, the $k$-th critical ideal of $G$ over the integers, $I^{\mathbb{Z}}_k(L(G,X))$, evaluated at $X=\trs(G)$ (respectively $(-1)\trs(G)$) is generated by $\Delta_k(A^{trs}(G))$ (resp. $\Delta_k(A^{trs}_+(G))$), the greatest common divisor of the $k$-minors of $A^{\trs}(G)$ ($A^{\trs}_+(G)$).
We will apply these results to the SNF of $A^{\trs}$ and the SNF of $A^{\trs}_+$ for trees. 
Let $T$ be a tree and $P^T_{u,v}$ be the set of edges of the only path from $u$ to $v\neq u$ in $T$. 
Moreover if $U\subseteq V(T)$, let
\[\rho (U) = |\{ F\subset E(T) : |F|=|U|-1,\ P_{u,v}^T \cap F \neq \emptyset\ \forall\ u, v\in U  \}|.\]
Now, let $T^l$ be the non-simple graph resulting from adding a loop to each vertex of $T$. A set of edges $\mu\in E(T^l)$ is called a 2-matching of $T^l$ if every vertex has at most two incidences with $\mu$ (note that a loop counts precisely as two incidences). Furthermore, a 2-matching of $T^l$ is called minimal if it is minimal with respect to the number of loops in $\mu$, and the set of vertices with loops in $\mu$ is denoted by $l(\mu)$. Finally, let $2\text{Mat}_{\min}^k(T^l)$ be the set of minimal 2-matchings of size $k$ of $T^l$.
Hence, we obtain the following results for the SNF of $A^{\trs}$ of trees.

\begin{theorem}\label{SNFtrs-trees}
    Let $T$ be a tree with $n\geq 3$ vertices. Then the SNF of $A^{\trs}(T)$ ($A^{\trs}_+(T)$) is equal to
    $$\diag \left(1,1,\Delta_3, \frac{\Delta_4}{\Delta_3}, \ldots,\frac{\Delta_n}{\Delta_{n-1}} \right),$$
    where $\Delta_k = \gcd ( \{ \det(A^{\trs}(T))_{|l (\mu) }: \mu \in 2\text{Mat}_{\min}^{k}(T^l) \} )$. In particular, 
    $$\Delta_n = \sum\limits_{a=1}^n\sum\limits_{U\subseteq T \atop |U|=a} \rho (U)\prod\limits_{ U=\{ u_{i_1}\ldots ,u_{i_a}\} } c(u_{i_j}), $$
    where $c(u) = \trs(u)-\deg(u)$ for any vertex $u\in T$.
\end{theorem}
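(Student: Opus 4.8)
The plan is to obtain Theorem~\ref{SNFtrs-trees} by combining the elementary divisors theorem (Lemma~\ref{lemma:elementarydivisors}) with the theory of critical ideals, and then to read off $\Delta_n$ from a direct determinant expansion. First I would reduce the signless statement to the ordinary one: since $T$ is a tree it is bipartite, so if $P=\diag(p_v)$ is the $\pm1$ diagonal matrix coming from a proper $2$-colouring of $T$, then $P\in GL_n(\mathbb{Z})$, $P^{-1}=P$, and $PA(T)P=-A(T)$, whence $A^{\trs}_+(T)=P\,A^{\trs}(T)\,P$. Thus $A^{\trs}_+(T)$ and $A^{\trs}(T)$ are equivalent over $\mathbb{Z}$, so they have the same SNF, and—because $P$ is diagonal—$\det\big(A^{\trs}_+(T)_{|W}\big)=\det\big(A^{\trs}(T)_{|W}\big)$ for every $W\subseteq V(T)$. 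So it suffices to treat $A^{\trs}$.

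Next, Lemma~\ref{lemma:elementarydivisors} over the PID $\mathbb{Z}$ says the $k$-th invariant factor of $A^{\trs}(T)$ is $\Delta_k/\Delta_{k-1}$, with $\Delta_k$ the gcd of the $k\times k$ minors, so everything reduces to identifying $\Delta_k$. Since $n\ge 3$ there is an edge, giving an off-diagonal entry $-1$, so $\Delta_1=1$; taking a path $a-b-c$ in $T$ and the $2\times2$ submatrix on rows $\{b,c\}$ and columns $\{a,b\}$ gives determinant $\pm1$, so $\Delta_2=1$, which accounts for the two leading $1$'s. For $k\ge 3$ I would invoke the description of the critical ideals of trees from \cite{citrees}, which presents $I_k^{\mathbb{Z}}(L(T,X))$, where $L(T,X)=\diag(x_1,\dots,x_n)-A(T)$, as the ideal generated by the principal minors $\det\big(L(T,X)_{|l(\mu)}\big)$ over the minimal $2$-matchings $\mu$ of size $k$ of $T^l$. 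Evaluating each generator at $X=\trs(T)$ turns it into $\det\big(A^{\trs}(T)_{|l(\mu)}\big)$, since $A^{\trs}(T)_{|l(\mu)}=\diag(\trs(v))_{v\in l(\mu)}-A(T)_{|l(\mu)}$; on the other hand, by \cite[Proposition~14]{Codeterminantal} the ideal $I_k^{\mathbb{Z}}(L(T,X))$ evaluated at $X=\trs(T)$ equals $\langle\Delta_k(A^{\trs}(T))\rangle$. As a finitely generated ideal of $\mathbb{Z}$ is generated by the gcd of any generating set, comparing the two presentations yields $\Delta_k=\gcd\{\det(A^{\trs}(T)_{|l(\mu)}):\mu\in 2\text{Mat}_{\min}^{k}(T^l)\}$.

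For the closed form I would use that $\Delta_n=\det A^{\trs}(T)$ (there is a single $n\times n$ minor). Writing $A^{\trs}(T)=\diag(\trs(v))-A(T)=L(T)+\diag(c(v))$ with $c(v)=\trs(v)-\deg(v)$ and $L(T)$ the ordinary Laplacian, multilinearity of the determinant in the diagonal perturbation gives \[ \det\big(L(T)+\diag(c(v))\big)=\sum_{U\subseteq V(T)}\Big(\prod_{u\in U}c(u)\Big)\det\big(L(T)_{|V\setminus U}\big). \] The term $U=\emptyset$ vanishes because $L(T)$ is singular, and by the all-minors matrix-tree theorem $\det\big(L(T)_{|V\setminus U}\big)$ is the number of spanning forests of $T$ with $|U|$ components, each containing exactly one vertex of $U$, i.e.\ the number of $F\subseteq E(T)$ with $|F|=|U|-1$ meeting every path $P^T_{u,v}$ for $u,v\in U$; this is exactly $\rho(U)$. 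Hence $\Delta_n=\sum_{a=1}^{n}\sum_{U\subseteq T,\,|U|=a}\rho(U)\prod_{u\in U}c(u)$, and the signless case follows from the first paragraph.

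The step I expect to be the main obstacle is the careful invocation of \cite{citrees}: one must match the combinatorial bookkeeping used there with the precise form $\det\big(L(T,X)_{|l(\mu)}\big)$ over \emph{minimal} $2$-matchings appearing in the statement, and verify that minimality is exactly the condition making these particular minors a generating set of $I_k^{\mathbb{Z}}(L(T,X))$ rather than a strict subideal. Once that dictionary is fixed, the evaluation at $X=\trs(T)$, the appeal to \cite[Proposition~14]{Codeterminantal}, and the determinant expansion for $\Delta_n$ are all routine.
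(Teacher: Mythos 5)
Your proposal is correct and follows the paper's skeleton for the core step: both reduce the problem to the critical ideals of trees from \cite{citrees} evaluated at $X=\trs(T)$, combined with \cite[Proposition 14]{Codeterminantal} and the elementary divisors theorem. You differ from the paper in two sub-arguments, and in both cases your route is a genuine (and arguably cleaner) alternative. For the signless case, the paper proves by induction on $|F|$ that $|\det(L(F,X))_{|U}|=|\det(\diag(x_1,\dots,x_n)+A(F))_{|U}|$ for every forest $F$ and every $U$; your bipartite signing $A^{\trs}_+(T)=P\,A^{\trs}(T)\,P$ with $P$ a unimodular $\pm1$ diagonal matrix gets the same conclusion in one line, and in fact gives exact equality of all corresponding principal minors (not just up to sign), which immediately yields both the equality of the two SNFs and the fact that the $\Delta_k$'s computed from $A^{\trs}$ also govern $A^{\trs}_+$. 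For $\Delta_n$, the paper counts spanning trees of the cone multigraph $H=T+v$ with $c(u)$ parallel edges from $u$ to $v$ (so that $A^{\trs}(T)$ is the reduced Laplacian of $H$), whereas you expand $\det(L(T)+\diag(c))$ by multilinearity and identify $\det(L(T)_{|V\setminus U})=\rho(U)$ via the all-minors matrix-tree theorem; these are two faces of the same matrix-tree computation and both correctly produce the stated formula. The one point you rightly flag as delicate --- that the minors $\det(L(T,X)_{|l(\mu)})$ over \emph{minimal} $2$-matchings of size $k$ generate $I_k^{\mathbb{Z}}(L(T,X))$ --- is exactly the content of \cite[Theorem 3.7]{citrees}, which the paper also cites without further elaboration, so your reliance on it is no weaker than the paper's.
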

\begin{proof}
    The result for $A^{\trs}(T)$ follows from \cite[Theorem 3.7]{citrees} (by evaluating at $X=\trs(G)$) and Lemma~\ref{lemma:elementarydivisors}. In particular, we can compute $\Delta_n$ by counting the spanning trees of the multigraph $H=T+v$, where $v\notin T$ is a new vertex with $c(u)=\trs(u)-\deg(u)$ edges between $u$ and $v$ for every vertex $u\in T$. Now, to see that this is also the SNF of $A^{\trs}_+(T)$ it is sufficient to show that if $F$ is a forest, then $|d(F)_{|U}|=|d_+(F)_{|U}|$ for every $U\subseteq V(F)$, where $d(F)_{|U}=\det(L(F, X))_{|U}$ and $d_+(F)_{|U}=\det(\diag (x_1,\ldots,x_n)+A(F))_{|U}$ (for brevity).
    We proceed by induction on $|F|$. It is easy to see that it holds for $|F|=3$. Now, assume that $|d(F^{'})_{|U^{'}}|=|d_+(F^{'})_{|U^{'}}|$ for every forest with $1\leq |F^{'}|\leq n-1$ and every $U^{'}\subseteq F^{'}$. Thus, let $F$ be a forest with $n$ vertices and let $U\subseteq F$ and $H$ be the induced subgraph of $F$ spanned by $U$. Moreover, let $u\in U$ be a vertex of $F$ in $U$ such that $\deg_H(u)\leq 1$ (if $\deg_H(u)= 1$ let $(u,w)\in E(H)$). Therefore, by Laplace expansion and our induction hypothesis, if $\deg_H(u)=1$ we have that
    \begin{align*}
    |d_+(F)_{|U} |&=|x_ud_+(F)_{|U\setminus \{ u\}}-d_+(F)_{|U\setminus \{ u,w \} }|= |x_ud_+(F-u)_{|U\setminus \{ u\}}-d_+(F-u)_{|U\setminus \{ u,w \} }|\\
        &=|x_ud(F-u)_{|U\setminus \{ u\}}-d(F-u)_{|U\setminus \{ u,w \} }|=|x_ud(F)_{|U\setminus \{ u\}}-d(F)_{|U\setminus \{ u,w \} }|\\
        &=|x_ud(F)_{|U\setminus \{ u\}}-(-1) \det((L(F, X))_{|U})_{u,w} =|d(F)_{|U}|,
 \end{align*}

    for every $U\subseteq V(F)$. Similarly, if $\deg_H(u)=0$, then
    \begin{align*}
        |d_+(F)|_{U} |&= |x_ud_+(F-u)|_{U\setminus \{ u\} }|=|x_ud(F-u)|_{U\setminus \{ u\}}|\\
        &=|x_ud(F-u)|_{U\setminus \{ u\}}|=|x_ud(F)|_{U\setminus \{ u\}}|\\
        &=|d(F)|_U|,
    \end{align*}


    for every $U\subseteq V(F)$. This finishes the proof.
\end{proof}

This result may shed some light on whether trees are determined by SNF of $A^{\trs}$ ($A^{\trs}_+$), see \cite{degreedistance}. In particular, we can see that the star graph with $n$ leaves ($S_{n+1}$) has
$$\Delta_k=(2n-1)^{k-2},$$
for $3\leq k\leq n$. Moreover, if $v_{n+1}$ is the dominant vertex then $c(v_{n+1})=0$ and if $v_{n+1}\notin U$ then $\rho(U)=|U|$. Lastly, $c(u)=2n-2$ for any leaf. Therefore
$$\Delta_n= \sum\limits_{a=1}^{n}\sum\limits_{U\subseteq T\setminus \{v_{n+1}\} \atop |U|=a} a  (2n-1)^a=\sum\limits_{a=1}^{n} \binom{n}{a}a(2n-1)^a=2n(n-1)(2n-1)^{n-1},$$
which proves Proposition~\ref{SNFstar}.

Furthermore, using some results from \cite{2trivialAlfVal}, we  obtain the following theorem.

\begin{theorem}
    Star graphs are determined by the SNF of $A^{\trs}$ ($A^{\trs}_+$).
\end{theorem}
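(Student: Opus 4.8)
The plan is to show that no graph $G$ on the same number of vertices as $S_{n+1}$ can share the Smith normal form computed in Proposition~\ref{SNFstar} unless $G \cong S_{n+1}$. First I would record what Proposition~\ref{SNFstar} gives us: $\SNF(A^{\trs}(S_{n+1})) = \diag(1,1,2n-1,\ldots,2n-1,2n(n-1)(2n-1))$ on $n+1$ vertices, so any coinvariant mate $G$ of $S_{n+1}$ must have exactly two invariant factors equal to $1$, then $n-2$ copies of $2n-1$, and a last invariant factor $2n(n-1)(2n-1)$. In particular $|\det(A^{\trs}(G))| = 2n(n-1)(2n-1)^{n-1}$, and the number of invariant factors equal to $1$ is exactly $2$. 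The strategy is to extract enough structural constraints from these two facts — the determinant and the count of trivial invariant factors — to force $G$ to be a star, invoking \cite{2trivialAlfVal} for the crucial step about graphs with exactly two trivial invariant factors.

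The key steps, in order, would be: (1) Use the result of \cite{2trivialAlfVal} characterizing the graphs whose associated matrix has at most two invariant factors equal to $1$; the matrix $A^{\trs}(G) = \diag(\trs(G)) - A(G)$ is exactly $L(G,X)$ evaluated at $X = \trs(G)$, so the critical-ideal machinery of \cite{2trivialAlfVal} applies, and this should restrict $G$ to a short explicit list of graph families. (2) For each candidate family on the list, compute (or bound) the relevant invariants: first check the count of trivial invariant factors is exactly $2$ (not fewer), then check $\Delta_n = |\det(A^{\trs}(G))|$ against $2n(n-1)(2n-1)^{n-1}$ using the combinatorial formula in Theorem~\ref{SNFtrs-trees} when $G$ is a tree, and a direct determinant computation otherwise. (3) Eliminate every candidate except $S_{n+1}$, concluding that $G \cong S_{n+1}$. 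The argument for $A^{\trs}_+$ is then identical, using the second half of Proposition~\ref{SNFstar} together with the fact (established in the proof of Theorem~\ref{SNFtrs-trees}) that for forests the minors of $L(F,X)$ and of $\diag(x_1,\ldots,x_n)+A(F)$ agree in absolute value, so the two SNFs coincide on the whole candidate list of trees.

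The main obstacle I expect is step (2): the list of graphs with exactly two trivial invariant factors from \cite{2trivialAlfVal} may include several infinite families (paths, certain unicyclic graphs, small dense graphs, complete graphs minus a matching, etc.), and for the non-tree families the transmission vector $\trs(G)$ must be computed and the determinant $\det(\diag(\trs(G)) \pm A(G))$ evaluated or bounded in $n$; showing each such determinant differs from $2n(n-1)(2n-1)^{n-1}$ (or that the number of trivial invariant factors is actually smaller than $2$, ruling the family out immediately) is where the real work lies. A cleaner route, if available, would be to first argue from the determinant's prime factorization — $2n(n-1)(2n-1)^{n-1}$ forces the large prime factors of $2n-1$ to appear with high multiplicity, which is a strong constraint — and only then fall back on \cite{2trivialAlfVal} to finish; but absent that shortcut, the case analysis over the finitely many families is the crux, and I would organize it as a lemma-by-lemma elimination.
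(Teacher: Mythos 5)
Your proposal follows the same skeleton as the paper's proof: observe that a coinvariant mate of $S_{n+1}$ must have at most two trivial invariant factors for $A^{\trs}$, invoke the classification of \cite{2trivialAlfVal} (Theorem 4.3 there: such graphs are exactly the induced subgraphs of complete tripartite graphs or of $(mK_1)\vee(K_p+K_q)$), and then eliminate every non-star candidate. The one genuine difference is the elimination criterion. You propose to compare the determinant $\Delta_n$ against $2n(n-1)(2n-1)^{n-1}$ and correctly flag this case analysis as the crux and the likely source of pain; the paper instead compares the \emph{third} invariant factor, using the explicit descriptions of the third critical ideal in \cite[Theorems 4.7 and 4.8]{2trivialAlfVal}. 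Evaluated at the transmission vector, that third invariant factor comes out as a gcd whose list of arguments contains $2$ (e.g.\ $\gcd(2,\,p+q-2(m-1),\,m+q-2(p-1),\,m+p-2(q-1))$ for $K_{m,p,q}$), so it divides $2$ and can never equal the star's third invariant factor $2n-1\geq 5$; this short-circuits essentially all of the work you anticipated. Two smaller corrections to your plan: the candidate list is not paths or unicyclic graphs but dense diameter-$2$ graphs (so the transmission vectors are immediate to write down, exactly as the paper exploits), and your fallback idea of arguing from the prime factorization of the determinant is unnecessary once the third invariant factor is available. Your handling of the $A^{\trs}_+$ case via the forest-minor identity from Theorem \ref{SNFtrs-trees} is not how the paper does it (the paper simply reruns the same gcd computation for $A^{\trs}_+$ on the non-tree candidates), but both are fine.
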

\begin{proof}
    First, we know that star graphs have at most two invariant factors equal to $1$ for $A^{\trs}$ ($A^{\trs}_+$) and, by \cite[Theorem 4.3]{2trivialAlfVal}, any such graph is an induced graph of a tripartite complete graph or an induced subgraph of $(mK_1)\vee (K_p+K_q)$. Note that every such (connected) graph has diameter $2$ and that its transmission vector is easy to compute. For instance if $G=K_{m,p,q}$ with $n=m+p+q$, $m,p,q\geq 2$ and respective maximal independent sets $V_m$, $V_p$ and $V_q$. Then, if $v\in V_m$, $\trs (v)=p+q-2(m-1)$ and similarly if $v\in V_p, V_q$. Thus, evaluating in \cite[Theorem 4.7]{2trivialAlfVal} gives us that the third invariant factor of $A^{\trs}(G)$ ($A^{\trs}_+(G)$) is $\gcd(2,p+q-2(m-1),m+q-2(p-1),m+p-2(q-1))$ which is different than $2n-1=2(m+p+q-1)-1$ independently of the values of $(m,p,q)$. Continuing in a similar manner with all possible choices of $m$, $p$ and $q$ and for $(mK_1)\vee (K_p+K_q)$ using \cite[Theorem 4.8]{2trivialAlfVal} we have that the star graphs are determined by their SNF with respect to $A^{\trs}$ ($A^{\trs}_+$).
\end{proof}

\begin{remark}
    Note that through the Smith normal form of $A^{\trs}(G)$ we can obtain the algebraic characterization of the sandpile group of a supergraph of $G$. Namely, let $H$ be a graph with vertex set $V(H)=V(G)\cup \{h\}$, $h\notin G$, and such that $E(H)$ is $E(G)$ together with $\trs(v)-\deg(v)$ copies of $(v,h)$ for each $v\in V$. If $\SNF (A^{\trs}(G))=\diag(d_1,\ldots,d_{|G|})$ and $d_k$ is the first invariant factor different than $1$, then $K(H)\cong \mathbb{Z}_{d_k}\oplus\cdots\oplus\mathbb{Z}_{d_{|G|}}$.
    Moreover, if $G=T$ is a tree, then $H$ (embedded in the plane) is the plane dual graph of certain bi-connected outerplane graph $H^*$. Thus, by \cite{Cori}, $K(H^*)\cong K(H)$ (see \cite{outerplane} for more details on sandpile groups of outerplane graphs).
\end{remark}

Before concluding this section, we will compute the SNF of the (signless) transmission adjacency matrix for another family of graphs with diameter 2. Namely, the complete multipartite transmission regular graphs. The critical ideals for complete multipartite graphs were studied in \cite{yibogao}. Thus we can use the results therein to compute the SNF for the aforementioned matrices. Let $C_k(G)$ be the $k$-th determinantal ideal of $xI-A(G)$, then the ideals $C_k(G)$ are usually called the \textit{characteristic ideals} of $G$. In particular, we will use the following result.

\begin{lemma}\cite[Corollary 4.1]{yibogao}\label{lemma:gao}
    Let $G$ be a complete multipartite graph with $m\geq 2$ parts of size $s\geq 2$. Then
    \[C_k(G))=\begin{cases}
        \langle 1\rangle & \quad 1\leq k\leq m-1,\\
        \langle (m-1)x^{j-m}x^{j-m+1}\rangle &\quad m\leq k\leq ms-m,\\
        \langle x^{j-m+1}(x+s)^{m+j-ms-1},&  \\
        \sum_{a=0}^{m+j-ms}(ms-j-1+a)s^a\binom{m+j-ms}{a}x^{2j-ms-a} \rangle & \quad ms-m < k < ms,\\
        \langle \sum_{a=0}^m(a-1)r^a\binom{m}{a}x^{ms-a} \rangle & \quad k=ms.
    \end{cases}\]
\end{lemma}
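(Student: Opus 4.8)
The plan is to compute the determinantal ideals of $xI-A(G)$ directly for $G=K_{s,\dots,s}$ with parts $V_1,\dots,V_m$ of size $s$, exploiting the symmetry of this matrix. Writing $A(G)=(J_m-I_m)\otimes J_s$ we obtain
\[
xI_{ms}-A(G)=xI_{ms}+I_m\otimes J_s-J_{ms},
\]
a matrix over $\mathbb{Z}[x]$ that is invariant under the wreath product $S_s\wr S_m$. Hence $C_k(G)$ is generated by the $k\times k$ minors over a transversal of this action, and the minor indexed by a row set $R$ and a column set $C$ depends only on the combinatorial type $(r_i,c_i,p_i)_{i\in[m]}$, where $r_i=|R\cap V_i|$, $c_i=|C\cap V_i|$, $p_i=|R\cap C\cap V_i|$, and $\sum_i r_i=\sum_i c_i=k$. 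The first fact to record is the shape of such a submatrix: inside the $i$-th diagonal block it carries the entry $x$ in each of the $p_i$ ``matched'' positions and $0$ elsewhere, while every off-diagonal block is the all-$(-1)$ matrix.

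Next I would prune the types that force a vanishing minor. Two unmatched rows in the same part are equal, so a nonzero minor requires $r_i-p_i\le 1$ and $c_i-p_i\le 1$ for all $i$, and a short argument using the rank-one summand $-J_{ms}$ trims the list further. For each surviving type I would apply row and column operations -- subtracting the unique unmatched row of a part from its matched rows (and dually for columns), then clearing the residual all-ones contribution by a rank-one update as in Lemma~\ref{lemma:jn} -- to reduce the submatrix to a small normal form whose determinant is an explicit polynomial in $x$: a signed monomial $\pm x^{e}$, or $\pm x^{e}(x+s)^{f}$, possibly carrying an integer factor produced by an inclusion-exclusion over the parts that are ``fully used''. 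This turns the problem into the finite task of listing, for each $k$, the polynomials realized as determinants of surviving types and computing the ideal they span.

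I would then treat the four ranges of $k$ separately. For $1\le k\le m-1$ at least $k+1$ parts are available, so choosing rows $u_i\in V_i$ ($i=1,\dots,k$) and columns $w_i\in V_{i+1}\setminus\{u_{i+1}\}$ gives a submatrix equal to $-(J_k-N)$ with $N$ the nilpotent single shift; since $\det(J_k-N)=\pm 1$, this one minor already forces $C_k(G)=\langle 1\rangle$. For $k=ms$ the only minor is $\det(xI-A(G))$, which from the eigenvalue multiplicities ($0$ with multiplicity $m(s-1)$, $-s$ with multiplicity $m-1$, and $(m-1)s$ simple) equals $x^{m(s-1)}(x+s)^{m-1}\bigl(x-(m-1)s\bigr)$; expanding this product yields exactly the stated generator $\sum_a (a-1)s^a\binom{m}{a}x^{ms-a}$, up to sign. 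In the two middle ranges $m\le k\le ms-m$ and $ms-m<k<ms$ one argues that once all $m$ parts must be touched, each extra row or column beyond the first $m$ is forced onto a matched diagonal position and so contributes one more factor of $x$; this accounts for the monomials $x^{k-m}$ and $x^{k-m+1}$ in the first range and for $x^{k-m+1}(x+s)^{\,m+k-ms-1}$ in the second, whereas the coefficient $m-1$ -- respectively the auxiliary generator $\sum_a(ms-k-1+a)s^a\binom{m+k-ms}{a}x^{2k-ms-a}$ -- is the explicit value of the inclusion-exclusion attached to the minor that saturates the $(x+s)$ and large-eigenvalue directions.

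The main obstacle is precisely this middle-range bookkeeping: enumerating the surviving minor types for $m\le k<ms$, evaluating their determinants, and then verifying both inclusions, namely that the listed polynomials are $\mathbb{Z}[x]$-combinations of minors and that every minor lies in the stated ideal. Identifying which single minor realizes the second generator in the range $ms-m<k<ms$ (the one involving the binomial sum) is the delicate point; the remaining ranges collapse either to exhibiting a unimodular minor or to one determinant evaluation.
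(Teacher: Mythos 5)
This lemma is imported verbatim from Gao \cite[Corollary 4.1]{yibogao}; the paper offers no proof of it, so your proposal can only be judged on its own terms. The framework you set up (exploiting the $S_s\wr S_m$ symmetry to reduce to minor \emph{types} $(r_i,c_i,p_i)$, pruning types with two unmatched rows in one part, and row-reducing the survivors) is sound, and the two extreme cases are genuinely done: the shifted all-ones minor $-(J_k-N)$ is indeed unimodular, which settles $1\le k\le m-1$, and your eigenvalue computation $\spec A=\{(m-1)s,\,(-s)^{m-1},\,0^{m(s-1)}\}$ correctly recovers the stated generator for $k=ms$ (after expanding $(x+s)^{m-1}(x-(m-1)s)$ via $m\binom{m-1}{a-1}=a\binom{m}{a}$). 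You have also correctly decoded the typographical noise in the statement as printed ($j$ for $k$, $r$ for $s$, and the second case meaning the two generators $(m-1)x^{k-m}$ and $x^{k-m+1}$).

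However, there is a genuine gap: the two middle ranges, which are the entire substance of the corollary, are not proved but only described. The assertion that ``each extra row or column beyond the first $m$ is forced onto a matched diagonal position and so contributes one more factor of $x$'' is not established, and neither inclusion is verified there. Concretely, for $m\le k\le ms-m$ you must show both that some $\mathbb{Z}[x]$-combination of minors equals $(m-1)x^{k-m}$ while \emph{no} minor (or combination) produces $x^{k-m}$ itself --- otherwise the ideal would collapse to $\langle x^{k-m}\rangle$ --- and that every surviving minor lies in $\langle (m-1)x^{k-m},\,x^{k-m+1}\rangle$. For $ms-m<k<ms$ you must exhibit the specific minor (or combination) realizing the binomial-sum generator and again bound all remaining minors; you explicitly flag this identification as open. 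Since the determinantal ideal is generated by all $k\times k$ minors, not by their gcd, this two-sided bookkeeping cannot be waved away: it is exactly where Gao's argument does its work (via the multivariate critical ideals $\diag(x_1,\dots,x_n)-A$ and their specialization). As it stands, the proposal is a plausible programme with the easy endpoints executed, not a proof of the lemma.
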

    
If a complete multipartite graph $G$ with $m\geq 2$ parts of size $s_1,\ldots ,s_m\geq 2$ is $t$-transmission regular, then $s_i=s_j$ ($=s$) for every $1\leq i,j\leq m$ and $t=n+s-2=ms+s-2$.

\begin{proposition}\label{SNFtrs-multipartite}
    Let $G$ be a $t$-transmission regular complete multipartite graph with $m\geq 2$ parts of size $s\geq 2$.  Then the SNF of $A^{\trs}(G)$ is equal to
\begin{align*}
&\diag(\mathbf{1}_{(m-1)}, a, t \mathbf{1}_{(ms-2m)}, 2t, t(t+s) \mathbf{1}_{(m-2)}, \frac{(s-1)t(t+s)}{a})\qquad \text{ if }m, s= 0\text{ mod } 2,\\
&\diag(\mathbf{1}_{(m-1)}, a, t \mathbf{1}_{(ms-2m)}, t, t(t+s) \mathbf{1}_{(m-2)}, \frac{2(s-1)t(t+s)}{a}) \qquad \text{ otherwise,}
\end{align*}
where $a=\gcd (m-1,2(s-1))$.


    
    
\end{proposition}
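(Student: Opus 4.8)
The plan is to reduce the computation of $\SNF(A^{\trs}(G))$ to the evaluation of the characteristic ideals $C_k(G)$ of $G$ given in Lemma~\ref{lemma:gao}, exactly as in the tree case. Since $G$ is $t$-transmission regular, we have $A^{\trs}(G) = t I - A(G)$, so $xI - A(G)$ evaluated at $x = t$ yields $A^{\trs}(G)$; equivalently, the $k$-th determinantal ideal $I_k^{\mathbb{Z}}(A^{\trs}(G))$ is generated by the evaluation at $x = t$ of the generators of $C_k(G)$. Then by Lemma~\ref{lemma:elementarydivisors} (the elementary divisors theorem), the $k$-th invariant factor is $\Delta_k/\Delta_{k-1}$, where $\Delta_k = \gcd$ of the $k$-minors of $A^{\trs}(G) = \gcd$ of the integers obtained by evaluating the generators of $C_k(G)$ at $x=t$. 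Recall $t = ms + s - 2 = n + s - 2$, so $t + s = ms + 2s - 2$ and (a useful fact to record early) $\gcd(t, t+s) = \gcd(t, s) $; since $t = (m+1)s - 2$, this equals $\gcd(s,2)$, which will explain the appearance of the two cases (parity of $s$, interacting with parity of $m$ through the $(m-1)x^{\cdots}$ generator).

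The key steps, in order, are: (1) For $1 \le k \le m-1$, $C_k(G) = \langle 1\rangle$, so $\Delta_k = 1$ and the first $m-1$ invariant factors are $1$. (2) For $m \le k \le ms - m$, the generator is $(m-1)x^{j-m}x^{j-m+1}$ (here $j$ is the minor size $k$); evaluating at $x=t$ gives $(m-1)t^{2k - 2m + 1}$, so $\Delta_k = (m-1)t^{2k-2m+1}$ for these $k$ — wait, this needs care: the gcd over all minors of a fixed size $k$ is what the determinantal ideal records, and the ideal is principal here, so $\Delta_k$ is exactly the evaluated generator up to sign. Thus $\Delta_m/\Delta_{m-1} = (m-1)t$, but this must be reconciled with the SNF shift: what actually appears as the $m$-th invariant factor is $\gcd(m-1, \text{power of }t\text{ from the previous block})$ contributions; this is where $a = \gcd(m-1, 2(s-1))$ must emerge. (3) For $ms - m < k < ms$, the ideal has two generators, $x^{j-m+1}(x+s)^{m+j-ms-1}$ and a sum; evaluate both at $x=t$ and take the gcd, using $\gcd(t,t+s) = \gcd(s,2)$ to simplify. (4) For $k = ms = n$, the single generator $\sum_{a=0}^m (a-1)r^a \binom{m}{a} x^{ms-a}$ (with $r = s$) evaluated at $x=t$ gives $\Delta_n$; one then simplifies the binomial sum in closed form. (5) Assemble: $\invs = (\Delta_1, \Delta_2/\Delta_1, \ldots, \Delta_n/\Delta_{n-1})$, track where extra factors of $2$ versus $s-1$ land, and split into the two stated cases according to the parities of $m$ and $s$.

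The main obstacle I expect is Step (2)–(4): correctly extracting the invariant factors from the *principal* generators in each range while keeping track of how the gcd of the generator's coefficients (the $(m-1)$, the $(s-1)t(t+s)$, etc.) interacts with the large power of $t$ carried over from the adjacent block — in particular, pinning down precisely that $a = \gcd(m-1, 2(s-1))$ is the right common factor and that it is $t$ (not $2t$) or $2t$ (not $t$) depending on parity. This is a delicate but finite gcd bookkeeping: one must compute, for each $k$ in the critical range $ms - m < k < ms$, the gcd of the two evaluated generators, using that one generator is divisible by $t^{j-m+1}(t+s)^{m+j-ms-1}$ and the other is a known integer combination, and then divide successive $\Delta_k$'s. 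The substitution $x \mapsto t$ collapses the polynomial generators to integers whose prime factorizations involve only the primes dividing $m-1$, $s-1$, $s$, $t$, and $t+s$; once one observes $t \equiv -2 \pmod s$ and hence $\gcd(t, s) = \gcd(2,s)$, and similarly controls $\gcd(t+s, \cdot)$, the two cases fall out and the closed form for $\Delta_n$ follows from the standard identity $\sum_{a=0}^m (a-1)\binom{m}{a} y^a = (my - 1)(1+y)^{m-1} + 1 - $ (lower-order correction), evaluated at $y = s/t$ after clearing denominators.
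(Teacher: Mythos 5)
Your overall route is the same as the paper's: since $G$ is $t$-transmission regular, $A^{\trs}(G)=tI-A(G)$, so one evaluates the characteristic ideals of Lemma~\ref{lemma:gao} at $x=t$, reads off $\Delta_k$ as the gcd of the evaluated generators, and applies Lemma~\ref{lemma:elementarydivisors}, with the two cases coming from parity bookkeeping via $t=ms+s-2$. Your observations that $t\equiv -2 \pmod{s}$ and that the closed form of $\Delta_n$ follows from $\sum_{a=0}^m (a-1)\binom{m}{a}y^a=(1+y)^{m-1}((m-1)y-1)$ are exactly the right ingredients (the latter gives $\Delta_n=2(s-1)t^{ms-m}(t+s)^{m-1}$, as in the paper).

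However, there is a genuine gap at your Step (2), and you flag it without resolving it. For $m\le k\le ms-m$ the ideal $C_k(G)$ is \emph{not} principal: it is generated by the two elements $(m-1)x^{j-m}$ and $x^{j-m+1}$ (the displayed formula in Lemma~\ref{lemma:gao} is missing a comma). Reading it as a principal ideal with generator $(m-1)x^{2(j-m)+1}$ forces $\Delta_m=(m-1)t$ and successive quotients $t^2$, which is inconsistent with the claimed SNF (whose $m$-th invariant factor is $a$ followed by a block of plain $t$'s); there is no ``SNF shift'' that repairs this. With the two-generator reading one gets $\Delta_j=\gcd\bigl((m-1)t^{j-m},\,t^{j-m+1}\bigr)=t^{j-m}\gcd(m-1,t)$, and since $t=s(m-1)+2(s-1)$ one has $\gcd(m-1,t)=\gcd(m-1,2(s-1))=a$. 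This is precisely where $a$ enters, and without it the rest of your gcd bookkeeping in Steps (3)--(5) (in particular deciding when the penultimate entry is $t$ versus $2t$, which the paper settles by comparing $a$ with $b=\gcd(s(m-1),2(s-1))$ and noting $b/a=2$ iff $m$ and $s$ are both even) cannot be carried out. So the plan is right, but as written the proof does not go through until the middle-range ideal is handled correctly.
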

\begin{proof}
    Let $\Delta_k$ be the greatest common divisor of the minors of $A^{\trs}(G)$. By evaluation in Lemma~\ref{lemma:gao} we have that $\Delta_j =1$ for every $1\leq j\leq m-1$ and $\Delta_m=\gcd(m-1,t)=\gcd(m-1,s(m-1)+2(s-1))=\gcd(m-1,2(s-1))=a$. Furthermore if $m+1\leq j\leq ms-m$, then $\Delta_j=\gcd((m-1)t^{j-m},t^{j-m+1})=t^{j-m}a$. Now, if $ms-m+1\leq j\leq ms-1$ then 
    \begin{align*}
        \Delta_j&=(r-1)t^{j-m}(t+r)^{j+m-1-mr}\gcd(t,r(m-1))\\
        &=(r-1)t^{j-m}(t+r)^{j+m-1-mr}\gcd(r(m-1),2(r-1)).
    \end{align*}
    Finally, $\Delta_n=2(r-1)t^{ms-m}(t+r)^{m-1}$. Let $b=\gcd(r(m-1),2(r-1))$, note that the results now follows since $\frac{b}{a}=2$ if and only if both $m$ and $s$ are even (and $a=b$ otherwise). 
\end{proof}

An analogous argument as used in the proof of Proposition \ref{SNFtrs-multipartite} can be used to obtain the SNF of $A^{\trs}_+$ for complete multipartite regular graphs.

\begin{proposition}
    Let $G$ be a $t$-transmission regular complete multipartite graph with $m\geq 2$ parts of size $s\geq 2$. Then the SNF of $A^{\trs}_+(G)$ is as equal to
\begin{align*}
&\diag(\mathbf{1}_{(m-1)}, a, t \mathbf{1}_{(m(s-2))}, 2t, t(t-s) \mathbf{1}_{(m-2)}, \frac{(ms-1)t(t-s)}{a} ) \qquad \text{ if }m,s=0\text{ mod }2,\\
&\diag(\mathbf{1}_{(m-1)}, a, t \mathbf{1}_{(ms-2m)}, t, t(t-s) \mathbf{1}_{(m-2)}, \frac{2(ms-1)t(t-s)}{a} )\qquad \text{ otherwise,}
\end{align*}
where $a=\gcd (m-1,2(s-1))$.
\end{proposition}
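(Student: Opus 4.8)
The plan is to mirror the argument used for $A^{\trs}(G)$ in Proposition~\ref{SNFtrs-multipartite}, replacing $D = 2(J-I)-A$ with the fact that on a transmission‑regular complete multipartite graph $\diag(\trs(G)) = tI$, so that $A^{\trs}_+(G) = tI + A(G)$. Thus the $k$-th determinantal ideal of $A^{\trs}_+(G)$ over $\mathbb{Z}$ is obtained from the $k$-th characteristic ideal $C_k(G)$ of Lemma~\ref{lemma:gao} by the substitution $x = -t$ (equivalently, evaluating $xI - A$ at $x=-t$ and negating, which does not change the generated ideal). Combined with Lemma~\ref{lemma:elementarydivisors}, the invariant factors are the successive quotients $\Delta_k/\Delta_{k-1}$ of the gcd's of the $k$-minors.

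First I would record that for a $t$-transmission regular complete multipartite graph with $m\ge 2$ parts of common size $s\ge 2$ one has $n = ms$ and $t = n+s-2 = ms+s-2 = s(m-1)+2(s-1)$, so $t+s = (m+1)s-2$ and $t-s$ is the quantity appearing in the signless case; note $t \equiv 2(s-1) \pmod{m-1}$. Then I would evaluate the four cases of Lemma~\ref{lemma:gao} at $x=-t$:
for $1\le k\le m-1$ the ideal is $\langle 1\rangle$, so $\Delta_k = 1$;
for $k=m$ one gets $\Delta_m = \gcd(m-1,\,t) = \gcd(m-1,\,2(s-1)) = a$;
for $m+1\le k\le ms-m$ one gets $\Delta_k = \gcd\big((m-1)t^{\,k-m},\,t^{\,k-m+1}\big) = t^{\,k-m}a$;
for $ms-m<k<ms$ the second generator, evaluated at $x=-t$, contributes (up to sign) $\sum_{a'} (ms-k-1+a')s^{a'}\binom{ms-k}{a'}(-t)^{2k-ms-a'}$, which after factoring $t^{k-m}(t-s)^{\,k+m-1-ms}$ leaves a factor whose gcd with $t\cdot(\text{something})$ must be computed as $\gcd\big(s(m-1),\,2(s-1)\big)$ — call it $b$ — so $\Delta_k = (ms-1)\,t^{\,k-m}(t-s)^{\,k+m-1-ms}b$ in the relevant normalization; and for $k=ms$ one gets $\Delta_n$ by evaluating the last generator at $x=-t$, giving $\Delta_n = 2(ms-1)\,t^{\,ms-m}(t-s)^{\,m-1}$ up to sign. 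Taking quotients $\Delta_k/\Delta_{k-1}$ then yields $m-1$ ones, then $a$, then $t$ repeated $ms-2m$ times, then the transition term ($t$ or $2t$ according to whether $b/a$ equals $1$ or $2$), then $t(t-s)$ repeated $m-2$ times, and finally $\dfrac{2(ms-1)t(t-s)}{a}$ or $\dfrac{(ms-1)t(t-s)}{a}$; and $b/a = 2$ exactly when both $m$ and $s$ are even, $b=a$ otherwise — this is the same case split as before, verified by an elementary parity/gcd check on $\gcd(s(m-1),2(s-1))$ versus $\gcd(m-1,2(s-1))$.

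The main obstacle I expect is the bookkeeping in the middle range $ms-m<k<ms$: one must carefully substitute $x=-t$ into the two‑generator presentation of $C_k(G)$, correctly factor out the common powers of $t$ and of $(t-s)=x+s$ evaluated at $x=-t$ (being attentive to signs, since $(-t)+s = -(t-s)$ raised to an exponent of fixed parity), and then reduce the remaining gcd of a polynomial‑coefficient sum to the clean closed form $\gcd(s(m-1),2(s-1))$ — exactly the step that the proof of Proposition~\ref{SNFtrs-multipartite} handled by an analogous computation. Everything else is routine once the substitution $A^{\trs}_+(G) = tI+A(G)$ and Lemma~\ref{lemma:elementarydivisors} are in place; I would therefore state the proof as ``an analogous argument to the proof of Proposition~\ref{SNFtrs-multipartite}'' and only highlight the two differences: the evaluation point is $x=-t$ rather than $x=-t$ with the distance substitution, and the leading arithmetic factors become $(ms-1)$ and $(t-s)$ in place of $(s-1)$ and $(t+s)$.
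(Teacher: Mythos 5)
Your proposal follows essentially the same route as the paper, which proves this proposition simply by invoking ``an analogous argument'' to Proposition~\ref{SNFtrs-multipartite}: evaluate the characteristic ideals of Lemma~\ref{lemma:gao} at $x=-t$ (using $A^{\trs}_+(G)=tI+A(G)$ for a transmission-regular graph), apply Lemma~\ref{lemma:elementarydivisors}, and track the gcd bookkeeping, with $(ms-1)$ and $(t-s)$ replacing $(s-1)$ and $(t+s)$. The only blemishes are cosmetic (the concluding sentence should contrast $x=-t$ with $x=t$, and the intermediate normalization of $\Delta_k$ in the range $ms-m<k<ms$ is stated loosely), but these mirror the level of detail in the paper's own computation and do not affect the argument.
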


Finally, we briefly mention the differences between the spectrum and the SNF when it comes to deducing graph combinatorial properties.

\begin{example}
For any graph $G$, the SNF of the Laplacian matrix $L$ preserves the determinant of its minors of size $|G|-1$, that is, its number of spanning trees (therefore being a tree or a complete graph, for instance, is determined) \cite{KlivansBook}. It also preserves the number of connected components \cite{Doob}, and the algebraic structure of its sandpile group \cite{KlivansBook}. On the other hand, it does not preserve the number of edges (which is preserved by the spectrum of $L$ and $A$). In fact, the smallest pair of graphs with the same SNF for $A$ but a different number of edges is the cycle and star graphs on $4$ vertices. Moreover, the smallest pair of graphs for which the number of edges is different but they share Laplacian SNF are the triangular prism (9 edges) and the triangular bipyramid ($H$) with a pendant vertex glued at any vertex of $H$ (10 edges). 
Figure~\ref{fig:1} shows the only pair of $L$-coinvariant graphs with at most 7 vertices having a different number of edges, vertex connectivity, and edge connectivity (and thus they are not $L$-cospectral nor $A$-cospectral). Their invariant factors for the Laplacian matrix are $(1,1,1,1,12,60)$. Indeed, in \cite{enumeration} we can see that, up to 10 vertices, very few graphs are determined by their SNF for $M=\{A,L,Q,D\}$, which might indicate that, in general, not many combinatorial properties may be preserved by the SNF of such matrices.

\begin{figure}[ht]
    \centering
    \begin{tabular}{cc}
    \begin{tikzpicture}[scale=1,thick]
		\tikzstyle{every node}=[minimum width=0pt, inner sep=2pt, circle]
			\draw (-1.5,0) node[draw] (0) { \tiny 0};
			\draw (0,0) node[draw] (1) { \tiny 1};
			\draw (1.5,0) node[draw] (2) { \tiny 2};
			\draw (-0.75,-1) node[draw] (3) { \tiny 3};
			\draw (0.75,-1) node[draw] (4) { \tiny 4};
			\draw (-0.75,1) node[draw] (5) { \tiny 5};
			\draw (0.75,1) node[draw] (6) { \tiny 6};
			\draw  (5) edge (6);
			\draw  (0) edge (3);
			\draw  (1) edge (3);
			\draw  (2) edge (3);
			\draw  (2) edge (4);
			\draw  (1) edge (4);
			\draw  (0) edge (4);
			\draw  (1) edge (6);
			\draw  (2) edge (6);
			\draw  (0) edge (5);
			\draw  (1) edge (5);
			\draw  (2) edge (5);
			\draw  (0) edge (6);
		\end{tikzpicture}     &  
		\begin{tikzpicture}[scale=1,thick]
		\tikzstyle{every node}=[minimum width=0pt, inner sep=2pt, circle]
			\draw (0,1) node[draw] (0) { \tiny 0};
			\draw (-0.755,0) node[draw] (1) { \tiny 1};
			\draw (-0.75,-1) node[draw] (2) { \tiny 2};
			\draw (0.75,0) node[draw] (3) { \tiny 3};
			\draw (0.75,-1) node[draw] (4) { \tiny 4};
			\draw (1.5,0) node[draw] (5) { \tiny 5};
			\draw (-1.5,0) node[draw] (6) { \tiny 6};
			\draw  (0) edge (4);
			\draw  (0) edge (3);
			\draw  (0) edge (1);
			\draw  (0) edge (2);
			\draw  (3) edge (5);
			\draw  (4) edge (5);
			\draw  (1) edge (6);
			\draw  (2) edge (6);
			\draw  (1) edge (3);
			\draw  (1) edge (2);
			\draw  (1) edge (4);
			\draw  (2) edge (3);
			\draw  (2) edge (4);
			\draw  (3) edge (4);
					\end{tikzpicture}\\
			$G_1$ & $G_2$
    \end{tabular}
    \caption{Smallest $L$-coinvariant pair on $7$ vertices with a different number of edges, vertex-connectivity, and edge-connectivity.}
    \label{fig:1}
\end{figure}
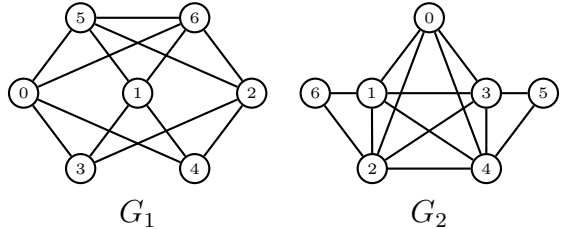

\end{example}

\section{Enumeration results}\label{sec:enumeration}

In this section, we present computational results on the amount of generalized coinvariant and generalized cospectral connected graphs up to $10$ vertices, for numerous matrices associated with graphs. Moreover, we also provide several similar enumeration results up to $11$ vertices for graphs with diameter 2 such that their complement has diameter 2 as well.

Let $\GSP_ n (M)$ denote the \emph{generalized $M$-cospectral uncertainty}, defined as the ratio between the number of graphs with at least a generalized $M$-cospectral mate and the number of connected graphs (with connected complement in the case of distance defined matrices). Similarly, let $\GIN_ n (M)$ denote the \emph{generalized $M$-coinvariant uncertainty}.

Let $\mathcal{G}_n^{\GIN}(M)$ be the set of connected graphs $G$ with $n$ vertices such that $G$ has at least one connected
generalized $M$-coinvariant mate for a given graph matrix $M$. 
Let $\G_n$ be the set of connected simple graphs with $n$ vertices and let $\CG_n\subsetneq \G_n$ be the set of connected graphs such that their complement is also connected. Note that the distance is well-defined for every pair of vertices on a graph $G$ (and thus the transmission of every vertex) only if $G$ is connected. Similarly, the generalized spectrum for matrices involving the distance is only defined for graphs in $\CG_n$ and $\mathcal{G}_n^{\GIN}(M)\subseteq \CG_n$ (namely, the distance matrix, the distance (signless) Laplacian, the (signless) adjacency-transmission, and the (signless) degree-distance matrices). Moreover, $| \CG_n|$ denotes the number of connected simple graphs ($| \G_n|$) minus the number of disconnected simple graphs on $n$ vertices.
Table \ref{Tab:generalisedcoinvariantALQD} shows computational results for generalized $M$-coinvariant graphs.

It is known that starting from $n=6$ every connected graph has $A$-coinvariant mates \cite{enumeration}. This is no longer the case for the number of connected graphs with generalized $A$-coinvariant mates (about $85\%$ for $n=6$). However, the data clearly indicates that almost every connected graph has such a mate, for instance for $n=10$ about $99.9\%$ of graphs have generalized $A$-coinvariant mates. Similarly, for the distance matrix, most connected graphs, up to 10 vertices, have a (generalized) $D$-coinvariant mate. On the other hand, there are much fewer generalized $L$-coinvariant and generalized $Q$-coinvariant graphs than $L$-coinvariant and $Q$-coinvariant graphs.  

By Theorem \ref{original-adjacency}, two graphs are generalized $A$-cospectral if and only if they are $(A,\mathbb{R})$-cospectral (similarly for $L$ and $Q$, as seen in Section \ref{sec:characterization} and illustrated in Table~\ref{Tab:generalisedcospectralALQD}).
The number of generalized $A$-cospectral graphs and the number of $L$-cospectral graphs have been previously computed for graphs up to 12 vertices  \cite{bs,HS2004}. On the other hand, for the distance matrix, there are 67 $D$-cospectral graphs that are not $(D,\mathbb{R})$-cospectral (13 with 9 vertices and 54 with ten). For the distance signless Laplacian, there are only 8 cospectral graphs (all of them with 10 vertices) that are not $(D^Q,\mathbb{R})$-cospectral.
For the degree-distance matrix, there are only 8 cospectral graphs that are not $(D^{\deg},\mathbb{R})$-cospectral (all with 10 vertices).
For the transmission-adjacency matrix, there are 235 cospectral graphs that are not $(A^{\trs},\mathbb{R})$-cospectral (10 with 9 vertices and 225 with 10 vertices). Also let us note that up to 10 vertices, if a graph is $D^{\deg}_+$-cospectral or $A^{\trs}_+$-cospectral, then it is also $(D^{\deg}_+,\mathbb{R})$-cospectral or $(A^{\trs}_+,\mathbb{R})$-cospectral, respectively. This computational result relates with what we have seen in Section \ref{sec:characterization}; for graphs having $\trs(G)+\deg(G) =s$ for some integer $s\in \mathbb{Z}$ as in Lemma~\ref{lemma:r1}, $M$-cospectrality for $D^{\deg}_+$ or $A^{\trs}_+$ is equivalent to generalized cospectrality, respectively.

\begin{table}[ht]
    \centering
    \begin{tabular}{|l||c|c|c|c|c|c|c|}
        \hline
        $n$ & 4 & 5 & 6 & 7 & 8 & 9 & 10  \\
        \hline
        $| \mathcal{G}_n |$ & 6 & 21 & 112 & 853 & 11117 & 261080 & 11716571\\
        \hline
        $|\mathcal{G}_n^{\GIN}(A)|$ & 0 & 12 & 95 & 830 & 11079 & 261021 & 11716497 \\
        $|\mathcal{G}_n^{\GIN}(L)|$ & 0 & 0 & 0 & 14 & 886 & 22124 & 950291 \\
        $|\mathcal{G}_n^{\GIN}(Q)|$ & 0 & 2 & 42 & 122 & 1000 & 10467 & 450816 \\
        \hline
        \hline
        $|\CG_n |$ & 1 & 8 & 68 & 662 & 9888 & 247492 & 11427974\\
        \hline
        $|\mathcal{G}_n^{\GIN}(D)|$ & 0 & 0 & 12 & 340 & 7467 & 232611 & 11316322 \\
        $|\mathcal{G}_n^{\GIN}(D^L)|$ & 0 & 0 & 0 & 0 & 45 & 2114 & 185406\\
        $|\mathcal{G}_n^{\GIN}(D^Q)|$ & 0 & 0 & 0 & 0 & 18 & 891 & 78208\\
        $|\mathcal{G}_n^{\GIN}(A^{\trs})|$ & 0 & 0 & 0 & 0 & 32 & 616 & 87841\\
        $|\mathcal{G}_n^{\GIN}(A^{\trs}_+)|$ & 0 & 0 & 0 & 0 &  36 & 2206 & 179094\\
        $|\mathcal{G}_n^{\GIN}(D^{\deg})|$ & 0 & 0 & 0 & 0 & 48 & 964 & 98588\\
        $|\mathcal{G}_n^{\GIN}(D^{\deg}_+)|$ & 0 & 3 & 4 & 34 &  500 & 7915 & 427394\\
        \hline
    \end{tabular}
    \caption{Number of connected generalized $M$-coinvariant graphs with at most $10$ vertices.}
	\label{Tab:generalisedcoinvariantALQD}
\end{table}

Table~\ref{Tab:generalisedcospectralALQD} shows the number of generalized $M$-cospectral graphs up to 10 vertices. We see, for instance, that $D^L$ and $A^{\trs}_+$ are the only matrices capable of distinguishing every graph in $CG_n$ for $n\leq 7$ either by their generalized $M$-spectrum or by their generalized SNF. However, considering $n\leq 10$ the matrix with the least number of generalized cospectral mates is $A^{\trs}$ followed by $D^Q$, and $D^{\deg}$. On the other hand, the matrix with the least number of generalized coinvariant graphs is $D^Q$, followed by $A^{\trs}$, and $D^{\deg}$.

\begin{table}[ht!]
    \centering
    \begin{tabular}{|l||c|c|c|c|c|c|c|}
        \hline
        $n$ & 4 & 5 & 6 & 7 & 8 & 9 & 10  \\
        \hline
        $| \mathcal{G}_n |$ & 6 & 21 & 112 & 853 & 11117 & 261080 & 11716571 \\
        \hline
        $|\mathcal{G}_n^{\GSP}(A)|$ & 0 & 0 & 0 & 32 & 1042 & 41212& 2338933 \\
        $|\mathcal{G}_n^{\GSP}(L)|$ & 0 & 0 & 4 & 115 & 1611 & 40560& 1367215\\
        $|\mathcal{G}_n^{\GSP}(Q)|$ & 0 & 2 & 10 & 80 & 998 & 17453& 613954\\
        \hline
        \hline
        $|\CG_n |$ & 1 & 8 & 68 & 662 & 9888 & 247492 & 11427974\\
        \hline
        $|\mathcal{G}_n^{\GSP}(D)|$ & 0 & 0 & 0 & 0 & 48 & 3480& 276328 \\
        $|\mathcal{G}_n^{\GSP}(D^L)|$ & 0 & 0 & 0 & 0 & 105 & 4118& 245140\\
        $|\mathcal{G}_n^{\GSP}(D^Q)|$ & 0 & 0 & 0 & 4 & 86 & 1519 & 95296\\
        $|\mathcal{G}_n^{\GSP}(A^{\trs})|$ & 0 & 0 & 0 & 4 & 56 & 1212& 75364\\
        $|\mathcal{G}_n^{\GSP}(A^{\trs}_+)|$ & 0 & 0 & 0 & 0 & 105 & 3624 & 232962\\
        $|\mathcal{G}_n^{\GSP}(D^{\deg})|$ & 0 & 0 & 0 & 4 & 76 & 2370& 124866 \\
        $|\mathcal{G}_n^{\GSP}(D^{\deg}_+)|$ & 0 & 0 & 0 & 24 & 413 & 11536 & 445738 \\
        \hline
    \end{tabular}
    \caption{Number of connected generalized $M$-cospectral graphs with at most $10$ vertices.}
	\label{Tab:generalisedcospectralALQD}
\end{table}

\subsection{Enumeration results: the case of diameter $2$}

We also present computational results for graphs $G$ such that $G$ and $\overline{G}$ have diameter 2, up to 11 vertices. Henceforth, let us denote the set of such graphs on $n$ vertices as $\mathcal{GC}_{n,d2}$. Recall that  two graphs $G,H\in \mathcal{GC}_{n,d2}$ satisfy Corollary~\ref{coro:complementdiam2ALQ}. Thus, when we focus on generalized $M$-cospectral in $\mathcal{GC}_{n,d2}$, it is sufficient to compute the sets of generalized $M$-cospectral graphs for the adjacency matrix, the Laplacian and signless Laplacian matrix. Let $|\mathcal{GC}_{n,d2}^{\GSP}(M)|$ denote the graphs in $\mathcal{GC}_{n,d2}$ having at least one generalized $M$-cospectral mate. Similarly, let $|\mathcal{GC}_n^{\GIN}(M)|$ denote the graphs in $\mathcal{GC}_{n,d2}$ having at least one generalized $M$-coinvariant mate.

Table \ref{Tab:generalisedcoALQdiam2}
shows the number of generalized $M$-cospectral and generalized $M$-coinvariant graphs for the aforementioned matrices. We observe that up to 11 vertices, the generalized spectrum of $L$ and $Q$ perform better than the generalized $A$-spectrum in  distinguishing graphs in $\mathcal{GC}_{n,d2}$. Also, it is clear that there are still many graphs with at least one generalized $A$-coinvariant mate. Moreover, Figure \ref{fig:generalizedcoALQdiam2} illustrates the generalized $M$-cospectral uncertainty and the generalized $M$-coinvariant uncertainty of graphs in $\mathcal{GC}_{n,d2}$ for $M=\{A,L,Q\}$ (except for the generalized $A$-coinvariant uncertainty for the aforementioned reasons).

\begin{table}[ht!]
    \centering
    \begin{tabular}{|l||c|c|c|c|c|c|}
        \hline
        $n$ & 6 & 7 & 8 & 9 & 10 & 11  \\
        \hline
        \hline
        $|\mathcal{GC}_{n,d2}|$ & 2 & 18 & 218 & 6069 & 364270 & 44343606\\
        \hline
        \hline
        $|\mathcal{GC}_{n,d2}^{\GSP}(A)|$ & 0 & 0 & 0 & 420 & 48992 & 6935002 \\
        $|\mathcal{GC}_{n,d2}^{\GSP}(L)|$ & 0 & 0 & 23 & 952 & 60884 & 4849676\\
        $|\mathcal{GC}_{n,d2}^{\GSP}(Q)|$ & 0 & 0 & 2 & 212 & 20710 & 1918758\\
        \hline
        \hline
        $|\mathcal{GC}_{n,d2}^{\GIN}(A)|$ & 0 & 10 & 163 & 5918 & 363834 &  44342414\\
        $|\mathcal{GC}_{n,d2}^{\GIN}(L)|$ & 0 & 0 & 9 & 382 & 45250 & 2466748 \\
        $|\mathcal{GC}_{n,d2}^{\GIN}(Q)|$ & 0 & 0 & 0 & 84 & 18760 & 902038\\
        \hline
    \end{tabular}
    \caption{Number of generalized $M$-cospectral  and generalized $M$-coinvariant graphs (with diameter 2 for themselves and their complement) with at most $11$ vertices and $M=\{A,L,Q\}$.}
	\label{Tab:generalisedcoALQdiam2}
\end{table}

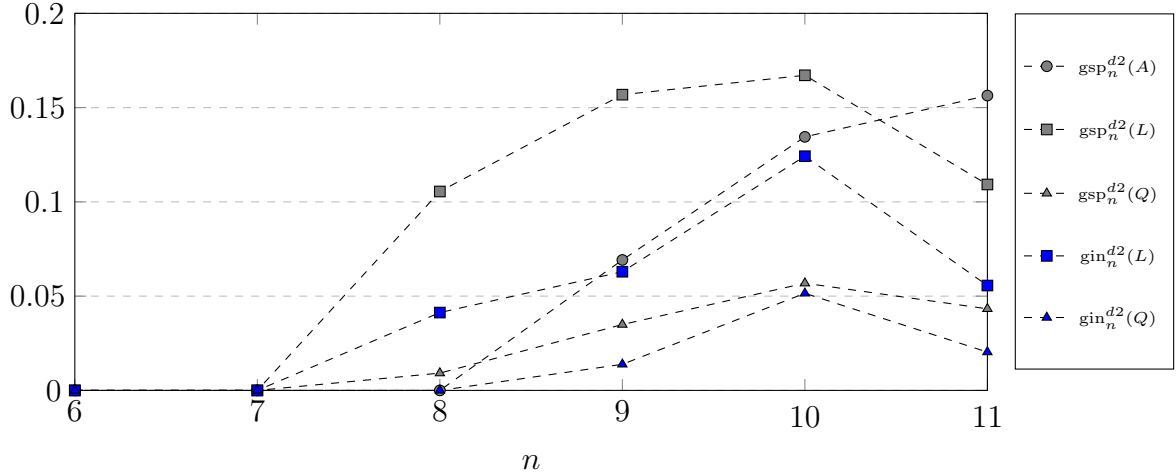
\begin{figure}[ht]
    \centering
    \begin{tikzpicture}[trim axis left]
    
    \begin{axis}[
        scale only axis,
        title={},
        xlabel={$n$},
        width=\textwidth-4cm, 
        height=5cm,
        xmin=6, xmax=11,
        ymin=0.0, ymax=0.2,
        xtick={6,7,8,9,10,11},
        legend style ={ 
            row sep=-0.4cm,
            at={(1.03,1)}, 
            anchor=north west,
            draw=black, 
            fill=white,
            align=left
        },
        ymajorgrids=true,
        grid style=dashed,
        legend columns=1
    ]
     
    \addplot[
        dashed,
        every mark/.append style={solid, fill=gray},
        mark=*,
        legend entry={\tiny $\GSP_n^{d2}(A)$}
        ]
        coordinates {
        (6,0.000000000000000)(7,0.0)(8,0.0)(9,0.06920415)(10,0.134493645)
        (11, 0.15639237819314919945842924907821)
        };
     
    \addplot[
        dashed,
        every mark/.append style={solid, fill=gray},
        mark=square*,
        legend entry={\tiny $\GSP_n^{d2}(L)$}
        ]
        coordinates {
        (6,0.0)(7,0.0)(8,0.105504587)(9,0.156862745)(10,0.167139759)
        (11,0.109230539)
        };

\addplot[
        dashed,
        every mark/.append style={solid, fill=gray},
        mark=triangle*,
        legend entry={\tiny $\GSP_n^{d2}(Q)$}
        ]
        coordinates {
        (6,0.0)(7,0.0)(8,0.00917431193)(9,0.0349316197)(10,0.0568534329)
        (11,0.0432702293088207576)
        };
        
        
    \addplot [
        dashed,
        every mark/.append style={solid, fill=blue},
        mark=square*,
        legend entry={\tiny $\GIN_n^{d2}(L)$}
        ]
        coordinates {
        (6,0.0)(7,0.0)(8,0.04128440366972477064220183486239)(9,0.06294282418849892898335804910199)(10,0.12422104482938479699124275949159)(11, 0.05562804251868916569392214065766)
        };

        \addplot [
        dashed,
        every mark/.append style={solid, fill=blue},
        mark=triangle*,
        legend entry={\tiny $\GIN_n^{d2}(Q)$}
        ]
        coordinates {
        (5,0.0)(6,0.0)(7,0.0)(8,0.0)(9,0.01384083044982698961937716262976)(10,0.05150026079556373020012627995717)(11,0.02034200827059486321432677351499)
        };



    
        
    \end{axis}
\end{tikzpicture}
    \caption{The fraction of connected graphs on $n$ vertices with diameter 2 and complement of diameter 2 that have at least one generalized $M$-cospectral mate with respect to a matrix $M$ is denoted as $\GSP_n^{d2}(M)$, and similarly for $\GIN_n^{d2}(M)$.}
    \label{fig:generalizedcoALQdiam2}
\end{figure}

On the other hand, Table~\ref{Tab:generalisedcoinvdiam2} shows the number of graphs in $\mathcal{GC}_{n,d2}^{\GIN}(M)$ for the graph matrices involving distance ($D$, $D^L$, $D^Q$, $D^{\deg}$, $D^{\deg}_+$, $A^{\trs}$, and $A^{\trs}_+$).
As for the adjacency matrix, we should note that for the distance matrix, most graphs up to 11 vertices have a generalized $D$-coinvariant mate. 
We also observe that the number of generalized coinvariant graphs for the Laplacian and signless Laplacian is smaller than the number of generalized cospectral graphs for such matrices. In this regard, Figure~\ref{fig:generalizedcoALQdiam2} suggests a very similar behavior. Further enumeration results would be needed in order to make a reasonable conjecture regarding this phenomenon.

\begin{table}[ht!]
    \centering
    \begin{tabular}{|l||c|c|c|c|c|c|}
        \hline
        $n$ & 6 & 7 & 8 & 9 & 10 & 11  \\
        \hline
        \hline
        $|\mathcal{GC}_{n,d2}|$ & 2 & 18 & 218 & 6069 & 364270 & 44343606\\
        \hline
        \hline
        $|\mathcal{GC}_{n,d2}^{\GIN}(D)|$ & 0 & 4 & 126 & 5206 & 353826 & 44245420 \\
        $|\mathcal{GC}_{n,d2}^{\GIN}(D^L)|$ & 0 & 0 & 9 & 428 & 45186 & 2615994\\
        $|\mathcal{GC}_{n,d2}^{\GIN}(D^Q)|$ & 0 & 0 & 0 & 84 & 19048 & 932632\\
        \hline
        $|\mathcal{GC}_{n,d2}^{\GIN}(D^{\deg})|$ & 0 & 0 & 0 & 96 & 19280 & 953406 \\
        $|\mathcal{GC}_{n,d2}^{\GIN}(D^{\deg}_+)|$ & 0 & 0 & 110 & 1523 & 116854 & 3495822\\
        $|\mathcal{GC}_{n,d2}^{\GIN}(A^{\trs})|$ & 0 & 0 & 0 & 84 & 18872 & 945612\\
        $|\mathcal{GC}_{n,d2}^{\GIN}(A^{\trs}_+)|$ & 0 & 0 & 8 & 492 & 45544 & 2463526\\
        \hline
    \end{tabular}
    \caption{Number of generalized $M$-coinvariant graphs (with diameter 2 for themselves and their complement) with at most $11$ vertices and $M=\{D,D^L,D^Q,D^{\deg},D^{\deg}_+,A^{\trs},A^{\trs}_+\}$.}
	\label{Tab:generalisedcoinvdiam2}
\end{table}


\section{Concluding remarks}

In this paper, we focused on the generalized spectrum and the generalized invariants of several matrices associated with a graph, and we investigated their power as invariants to distinguish and characterize graphs. Motivated by Stanley's recent interview \cite{interview}, we also presented some new combinatorial applications of the SNF in combinatorics.

In Section \ref{sec:characterization} we showed a new characterization of generalized cospectral graphs in terms of codeterminantal graphs, extending a classic result by Johnson and Newman \cite{JohnsonN}. As an intermediate result, we obtained a necessary and sufficient condition for graphs to be codeterminantal on $\mathbb{Q}[x]$.

In Section \ref{sec:SNF}, we computed the SNF of several graph classes (like trees and complete multipartite graphs) for the matrices $A^{\trs}$ and $A^{\trs}_+$, and we showed that star graphs are determined by the SNF of $A^{\trs}$ and $A^{\trs}_+$. We conjecture that complete multipartite transmission regular graphs are determined by the SNF of $A^{\trs}$ and $A^{\trs}_+$.
By computing the SNF of trees for $A^{\trs}$ and $A^{\trs}_+$, we shed some light on the question of whether almost all trees are determined by the SNF of the aforementioned transmission-adjacency matrices, see \cite{degreedistance}.

The computational results from Section \ref{sec:enumeration} suggest that the generalized spectrum of $A^{\trs}$ may be the best invariant to distinguish graphs (note that generalized $A^{\trs}$-invariants also present a good behavior), followed by the generalized $D^Q$-invariants. For the other considered matrices, it seems that the generalized invariants perform better than the generalized spectrum for distinguishing graphs. For the case of graphs of diameter $2$ and up to $11$ vertices, the generalized $Q$-invariants outperform the rest of the studied invariants.




\subsection*{Acknowledgements}
Aida Abiad is partially supported by the Dutch Research Council through the grant VI.Vidi.213.085 and by the Research Foundation Flanders through the grant 1285921N. The research of Carlos Alfaro is partially supported by CONACyT and SNI.


\end{document}